\documentclass[12pt,reqno]{amsart}
\usepackage{fullpage}
\usepackage{color}
\usepackage{amsmath}
\usepackage{amssymb}
\usepackage{bm}

\newtheorem{theorem}{Theorem}[section]
\newtheorem{lemma}[theorem]{Lemma}
\newtheorem{prop}[theorem]{Proposition}
\newtheorem{cor}[theorem]{Corollary}
\newtheorem{conjecture}[theorem]{Conjecture}
\theoremstyle{definition}


\renewcommand{\mod}[1]{{\ifmmode\text{\rm\ (mod~$#1$)}\else\discretionary{}{}{\hbox{ }}\rm(mod~$#1$)\fi}}


\newcommand{\A}{{\mathcal A}}

\newcommand{\cL}{{\mathcal L}}
\newcommand{\N}{{\mathbb N}}
\newcommand{\E}{{\mathcal E}}

\newcommand{\Z}{{\mathbb Z}}

\vfuzz=2pt

\begin{document}

\title{Divisor sums representable as the sum of two squares}
\author[Lee Troupe]{Lee Troupe}
\address{Department of Mathematics and Computer Science, University of Lethbridge, C526 University Hall, 4401 University Drive, Lethbridge, AB, Canada T1K 3M4}
\email{lee.troupe@uleth.ca}

\begin{abstract}
Let $s(n)$ denote the sum of the proper divisors of the natural number $n$. We show that the number of $n \leq x$ such that $s(n)$ is a sum of two squares has order of magnitude $x/\sqrt{\log x}$, which agrees with the count of $n \leq x$ which are a sum of two squares. Our result confirms a special case of a conjecture of Erd{\H o}s, Granville, Pomerance and Spiro, who in a 1990 paper asserted that if $\A \subset \N$ has asymptotic density zero (e.g. if $\A$ is the set of $n \leq x$ which are a sum of two squares), then $s^{-1}(\A)$ also has asymptotic density zero.
\end{abstract}

\maketitle

\section{Introduction}

For a natural number $n$, define $s(n)$ to be the sum of the proper divisors of $n$. In a 1990 paper \cite{egps90}, Erd{\H o}s, Granville, Pomerance, and Spiro propose the following conjecture, which has motivated a substantial amount of research concerning $s(n)$.

\begin{conjecture}\label{egps conjecture}
Let $\A$ be a subset of the natural numbers of asymptotic density zero. Then $s^{-1}(\A)$ also has asymptotic density zero.
\end{conjecture}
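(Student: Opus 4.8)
The statement is the full Erd{\H o}s--Granville--Pomerance--Spiro conjecture, which remains open; I therefore outline the natural line of attack and pinpoint where it breaks down for an arbitrary density-zero set $\A$. The plan is to bound $V(x) := \#\{n \le x : s(n) \in \A\}$ by isolating the largest prime factor of $n$. A standard anatomy-of-integers argument shows that all but $o(x)$ of the $n \le x$ can be written as $n = pm$ with $p = P(n)$ a large prime satisfying $p > P(m)$ and $p^2 \nmid n$; for such $n$ the multiplicativity of $\sigma$ yields the identity $s(pm) = p\,s(m) + \sigma(m)$. Up to this admissible error, $V(x)$ is then at most $\sum_{m} \#\{p \le x/m : p \text{ prime},\ p\,s(m) + \sigma(m) \in \A\}$, and the task reduces to showing that, on average over $m$, the shifted-and-dilated prime sequence $p \mapsto p\,s(m)+\sigma(m)$ only rarely lands in $\A$.

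First I would exploit the observation that, for each fixed $m$, all of the values $p\,s(m)+\sigma(m)$ lie in the single residue class $\sigma(m) \pmod{s(m)}$. If $\A$ were equidistributed in arithmetic progressions --- say $\#\{a \in \A : a \le Y,\ a \equiv b \pmod q\} = o(Y/q)$ uniformly for $q$ in a suitable range --- then a Brun--Titchmarsh upper bound for the relevant primes $p$, combined with summation over $m$ weighted by $1/m$ (whose total is of size $\log x$), would deliver $V(x) = o(x)$. This is precisely the mechanism that succeeds whenever $\A$ has arithmetic structure: for $\A$ the sums of two squares, membership is governed by a character-type condition, and Landau's theorem together with its analogues in progressions supplies exactly the needed uniformity.

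The main obstacle --- and the reason the conjecture is open in full generality --- is that an arbitrary density-zero $\A$ carries no such structure. Density zero controls only $\#(\A \cap [1,Y]) = o(Y)$ globally; it permits $\A$ to concentrate almost entirely inside a single progression $\sigma(m_0) \pmod{s(m_0)}$, or to be engineered so that $(a-\sigma(m))/s(m)$ is prime for an abnormally large proportion of the elements $a \in \A$ across many different $m$ at once. Swapping the order of summation merely turns the count into $\sum_{a \in \A} \#\{n \le x : s(n) = a\}$, which re-expresses $V(x)$ and therefore gives nothing unless one already controls the fibers $s^{-1}(a)$ uniformly on average --- a statement essentially equivalent in strength to the conjecture itself. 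The genuine difficulty is thus to decouple the adversarial set $\A$ from the progressions $\sigma(m) \pmod{s(m)}$ generated by the range of $s$; absent structural input on $\A$, no current method (sieve, large sieve, or dispersion) provides this decoupling, which is why the realistic target is a structured special case such as the sums of two squares rather than the conjecture exactly as worded.
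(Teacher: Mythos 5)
You were asked to prove Conjecture \ref{egps conjecture}, but this statement is not proved in the paper at all: it is the open Erd{\H o}s--Granville--Pomerance--Spiro conjecture, stated purely as motivation, and the paper's actual contribution (Theorem \ref{main thm sum of two squares}) is the special case in which $\A$ is the set of sums of two squares. You correctly identified this, and declining to manufacture a proof of an open problem is exactly the right response; there is no paper proof to compare yours against.

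That said, your outline of the attack does match the framework the paper deploys for its provable special case: discard the exceptional set $\E(x)$ of $n \leq x$ with $P(n) \leq x^{1/\log_2 x}$ or $P(n)^2 \mid n$ (Lemma \ref{error set size}), write $n = mP$ with $P = P(n)$, $P \nmid m$, use the identity $s(n) = Ps(m) + \sigma(m)$, and sieve over $P$ for each fixed $m$. One refinement to your second paragraph: for sums of two squares the paper does \emph{not} run the argument through equidistribution of $\A$ in progressions. Instead it exploits Fermat's multiplicative criterion directly, sieving the linear form $Ps(m) + \sigma(m)$ for prime factors $q \equiv 3 \pmod{4}$ --- Selberg's and Brun's upper-bound sieves for the upper bound, and the half-dimensional sieve result of Friedlander and Iwaniec (Theorem \ref{operadecribro}) for the lower bound --- with Brun--Titchmarsh appearing only in the auxiliary step controlling the square factor $R$ of $s(n)$ (Lemma \ref{square factor size}). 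The residue-class observation you make (that $Ps(m)+\sigma(m) \equiv \sigma(m) \pmod{s(m)}$) is used in the paper in precisely this auxiliary role, via the decompositions $R = R_1R_2$ with $R_1 = (R, s(m))$. Your diagnosis of the obstruction in the general case --- that density zero alone lets $\A$ concentrate in the progressions generated by $s$, and that interchanging summation merely re-expresses the count through the fibers $s^{-1}(a)$ --- is accurate and consistent with the partial results cited in the introduction, where progress requires either structure on $\A$ (primes, palindromes, sums of two squares) or a counting function as thin as $x^{1/2 + \epsilon(x)}$.
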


Certain special cases of this conjecture are known to be true. If $\A$ is the set of prime numbers, Pollack shows in  \cite{pol14} that the number of $n \leq x$ such that $s(n) \in \A$ is $O(x/\log x)$. In \cite{pol15}, the same author shows that if $\A$ is the set of palindromes in any given base, then $s^{-1}(\A)$ has density zero. If one wishes to consider sets $\A$ without a prescribed structure, work of Pollack, Pomerance and Thompson  \cite{ppt18} shows that the conjecture holds for any set $\A$ whose counting function is $\ll x^{1/2 + \epsilon(x)}$, where $\epsilon(x)$ is any fixed function tending to 0 as $x \to \infty$.

Let $B(x)$ denote the count of $n \leq x$ such that $n$ can be represented as a sum of two squares. A classical result of Landau \cite{lan08} states that $B(x) \sim C x/\sqrt{\log x}$, where $C$ is an explicit constant. In particular, the set of $n \leq x$ which can be represented as the sum of two squares has asymptotic density zero, and so, in light of Conjecture \ref{egps conjecture}, it is natural to wonder whether the preimage of this set under $s(n)$ has density zero. It turns out that we can prove something stronger.

\begin{theorem}\label{main thm sum of two squares}
Let $B_s(x)$ denote the count of $n \leq x$ such that $s(n)$ can be written as a sum of two squares. Then
\[
 B_s(x) \asymp \frac{x}{(\log x)^{1/2}},
\]
where the constants implied by the $\asymp$ symbol are absolute.
\end{theorem}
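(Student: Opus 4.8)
The plan is to convert both bounds into a single local problem about a linear form evaluated at a prime, exploiting the multiplicativity of $\sigma$. Almost every $n\le x$ factors uniquely as $n=Pm$ with $P=P^+(n)$ its largest prime factor, $P\nmid m$, and $P$ large: the $n$ with $P^+(n)\le\exp(\sqrt{\log x})$, together with those divisible by the square of a prime exceeding $\exp(\sqrt{\log x})$, number $o(x/\sqrt{\log x})$ and may be discarded. For the surviving $n$, since $P$ is prime and $P\nmid m$ we have $\sigma(n)=(P+1)\sigma(m)$, which gives the key identity
\[
 s(n)=P\,s(m)+\sigma(m).
\]
Writing $T=x/m$, the task reduces to estimating, for each $m$, the number $N(m)$ of primes $P\in(P^+(m),T]$ for which the linear form $F(P)=s(m)\,P+\sigma(m)$ is a sum of two squares, and then summing $\sum_m N(m)$. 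Crucially, because the representation $n=P^+(n)\,m$ is unique, this double count introduces no repetition.

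For the upper bound I would run an upper-bound sieve (Selberg or Brun) on the sequence $\{F(P)\}$, sieving simultaneously by the primality of $P$ and by the necessary condition for $F(P)$ to be a sum of two squares, namely that no prime $q\equiv 3\pmod 4$ up to a small power of $T$ divides $F(P)$ to exactly the first power. Since $q\,\|\,F(P)$ removes a proportion $\asymp 1/q$ of the residues modulo $q^2$ whenever $q\nmid s(m)$, a sieve of dimension $3/2$ delivers $N(m)\ll \frac{T}{\log T}\cdot\frac{1}{\sqrt{\log T}}$ for the bulk of $m$. Summing and using $\sum_{m\le T}1/m\asymp\log x$ supplies exactly the missing factor of $\log x$, producing the bound $\ll x/\sqrt{\log x}$. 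The delicate point is uniformity in $m$: one must bound separately the atypical $m$ for which $s(m)$ — equivalently, since $\sigma(m)=s(m)+m$, the integer $\gcd(s(m),m)$ — is divisible by many small primes $\equiv 3\pmod 4$, as these inflate the local densities.

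For the lower bound the same decomposition reduces matters to producing, over a family of $m\le x^{\delta}$ with $\sum 1/m\asymp\log x$, the matching lower bound $N(m)\gg \frac{T}{\log T\,\sqrt{\log T}}$. Here lies the main difficulty: one needs a \emph{lower} bound for the number of primes $P$ at which $F(P)$ is a sum of two squares, which is precisely the setting of Iwaniec's half-dimensional sieve. I would restrict to ``good'' $m$ — those for which $\gcd(s(m),m)$ carries no prime $q\equiv 3\pmod 4$ to an odd power, so that $F$ is not structurally barred from representing sums of two squares — verify that such $m$ still form a positive proportion so that $\sum_{\text{good }m\le x^{\delta}}1/m\asymp\log x$, and then apply the half-dimensional sieve, with the primality of $P$ incorporated through a weighted sieve or the equidistribution of $F$ in progressions, to obtain $N(m)\gg \frac{T}{(\log T)^{3/2}}$ with an absolute implied constant. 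Summing over the good $m$ then yields $B_s(x)\gg x/\sqrt{\log x}$.

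I expect the lower-bound sieve to be the hardest step. Upper bounds for sums of two squares in a linear sequence are routine, but the corresponding lower bound is genuinely harder, requiring the half-dimensional sieve together with enough uniformity in the parameters $s(m)$ and $\sigma(m)$ to sum the main terms over a range of $m$ long enough to recover the full factor of $\log x$. Controlling the sieve's error terms uniformly across this range, and isolating the negligible exceptional $m$, is the technical heart of the argument.
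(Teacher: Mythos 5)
Your lower-bound plan is essentially the paper's: restrict to small $m$ (the paper takes $m \le x^{1/500}$) satisfying structural conditions, apply the half-dimensional sieve in the form of Friedlander--Iwaniec (Theorem 14.8 of \emph{Opera de Cribro}) to the linear form $P \mapsto s(m)P + \sigma(m)$ (after dividing out $\gcd(s(m),\sigma(m))$ and forcing $m \equiv 3 \pmod 4$ so that the hypothesis $\alpha + \beta \equiv 1 \pmod 4$ holds), and sum over $m$. One caveat: your bookkeeping ``$\sum_{\text{good } m} 1/m \asymp \log x$ together with $N(m) \gg T/(\log T)^{3/2}$ uniformly'' is not quite how the count closes. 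In the paper the good $m$ have harmonic sum only $\asymp \log x/\sqrt{\log_3 x}$, and the deficit is recovered from the sieve's local factor $\prod_{p \mid \alpha\beta,\, p \equiv 3 \pmod 4}(1 + 1/p) \gg \sqrt{\log_3 x}$, which is large precisely because $\sigma(m)$ is divisible by every prime up to $\sqrt{\log_2 x}$ for almost all $m$. That is repairable bookkeeping, not a gap.

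The upper bound, however, has a genuine gap: sieving only over $P$ and then summing $1/m$ cannot give $x/\sqrt{\log x}$. Quantitatively, write $x/m = x^{v}$; your per-$m$ bound is $\frac{x/m}{(v\log x)^{3/2}}$, the harmonic sum of $m$ over a ratio-$e$ block is $\asymp 1$, and $v$ decreases by $\asymp 1/\log x$ per block, so summing over $m$ gives
\[
\asymp \frac{x}{\sqrt{\log x}} \int_{v_0}^{1} \frac{dv}{v^{3/2}} \asymp \frac{x}{\sqrt{\log x}}\cdot \frac{1}{\sqrt{v_0}},
\]
where $v_0$ is the least admissible value of $v$. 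This is $O(x)$ (trivial) if all $m$ are allowed, and still $\asymp x\sqrt{\log_2 x}/\sqrt{\log x}$ even after discarding $P \le x^{1/\log_2 x}$ (i.e.\ $v_0 = 1/\log_2 x$): the large $m$ dominate and you lose at least a factor $\sqrt{\log_2 x}$. Worse, the local densities are inflated for \emph{typical} $m$, not atypical ones: for almost all $m$ every prime $q \le \sqrt{\log_2 x}$ divides $\sigma(m)$ (this is condition (i) of Proposition \ref{m conditions}, which holds and cannot be avoided), so the factor $\prod_{q \mid \sigma(m),\, q \equiv 3 \pmod 4,\, q \le \sqrt{\log_2 x}}(1+1/q) \asymp \sqrt{\log_3 x}$ appears in the sieve bound for the bulk of $m$; excising ``atypical'' $m$ does not remove it. The idea your proposal is missing is that one must count the $m$ nontrivially as well as the $P$: since the $3 \pmod 4$ part $R$ of $s(n)$ is a square which may be assumed $\le (\log x)^2$ (Lemma \ref{square factor size}), any prime $q \equiv 3 \pmod 4$, $q \le \sqrt{\log_2 x}$, $q \nmid R$ dividing $m$ would divide $(m,\sigma(m)) \mid s(n)$ and hence $R$, a contradiction; moreover $m \in (x^{\lambda}, ex^{\lambda}]$ is necessarily $x^{1-\lambda}$-smooth. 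The Halberstam--Richert-type bound of Lemma \ref{smooth lemma 3 mod 4} then gives $\ll x^{\lambda}(1-\lambda)^{2}/\sqrt{\log_3 x}$ such $m$, and exactly these two savings --- the $(1-\lambda)^{2}$ overpowering the $(1-\lambda)^{-3/2}$ from the $P$-sieve so that each of the $O(\log x)$ values of $\lambda$ contributes $\ll x/(\log x)^{3/2}$, and the $1/\sqrt{\log_3 x}$ cancelling the local-factor inflation --- are what make the total $\ll x/\sqrt{\log x}$. Without this bilateral count, the upper bound does not close.
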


In other words, not only does Conjecture \ref{egps conjecture} hold in this particular case, but in fact the order of magnitude of $B_s(x)$ is the same as that of $B(x)$.

In Section 2, we give several lemmas which lay the groundwork for the proof of Theorem \ref{main thm sum of two squares}. In Section 3, we establish a crude upper bound on the number of $n \leq x$ such that $s(n)$ is a sum of two squares, which we leverage to show that we can impose a number of additional conditions on $n$. We prove the upper bound implicit in Theorem \ref{main thm sum of two squares} in Section 4, and we prove the lower bound in Section 5.

\bigskip

\noindent\textbf{Notation.} For a positive integer $k$, we let $\log_k x$ denote the $k$th iterate of the natural logarithm. In particular, $\log_k x$ is not to be confused with the base-$k$ logarithm, and at each occurrence we assume that $x$ is large enough so that $\log_k x$ is a well-defined quantity. For a natural number $n$, we let $P(n)$ denote the largest prime factor of $n$. The function $\tau(n)$ represents the number of divisors of $n$, and $\varphi(n)$ is Euler's function, as usual. Other notation may be defined as necessary.

%

\section{Preliminaries}

Let $P(n)$ denote the largest prime factor of $n$. We define
\begin{align}\label{error set}
\E(x) := \{n \leq x : P(n) \leq x^{1/\log_2 x} \quad\text{or}\quad P(n)^2 \mid n\}.
\end{align}
Notice that each integer $n \notin \E(x)$ can be written in the form $n = mP$, where $P = P(n)$ and $P \nmid m$. We also have $s(n) = Ps(m) + \sigma(m)$, where $\sigma(m)$ denotes the sum of all divisors of $m$. It is convenient to work with such representations of $s(n)$, as they are amenable to sieve methods (see Section 3). The following lemma shows that considering only those $n \notin \E(x)$ does not discard too many values of $n$.

\begin{lemma}\label{error set size}
For sufficiently large $x$, we have $\#\E(x) \ll x/(\log x)^2$.
\end{lemma}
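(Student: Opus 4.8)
The plan is to bound the size of $\E(x)$ by splitting it according to its two defining conditions and estimating each piece separately. Write $\E(x) = \E_1(x) \cup \E_2(x)$, where $\E_1(x) = \{n \leq x : P(n) \leq x^{1/\log_2 x}\}$ is the set of $x^{1/\log_2 x}$-smooth numbers up to $x$, and $\E_2(x) = \{n \leq x : P(n)^2 \mid n\}$ is the set of integers up to $x$ that are divisible by the square of their largest prime factor. Clearly $\#\E(x) \leq \#\E_1(x) + \#\E_2(x)$, so it suffices to show each of these counts is $\ll x/(\log x)^2$.

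For $\E_1(x)$, I would invoke standard smooth-number estimates. Writing $\Psi(x,y)$ for the count of $y$-smooth numbers up to $x$, we have $\#\E_1(x) = \Psi(x, x^{1/\log_2 x})$. With $u = \log x / \log y = \log_2 x$, the classical bound $\Psi(x,y) \leq x u^{-u(1+o(1))}$ (valid in this range, e.g. by de Bruijn or Canfield–Erdős–Pomerance) gives $\#\E_1(x) \leq x (\log_2 x)^{-\log_2 x (1 + o(1))}$. Since $(\log_2 x)^{\log_2 x}$ grows faster than any fixed power of $\log x$, this is easily $\ll x/(\log x)^2$ for large $x$. In fact a crude Rankin-type argument also suffices here and avoids quoting the sharpest smooth-number results.

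For $\E_2(x)$, I would estimate directly:
\[
\#\E_2(x) \leq \sum_{p \leq \sqrt{x}} \#\{n \leq x : p^2 \mid n,\ P(n) = p\} \leq \sum_{p} \#\{n \leq x : p^2 \mid n\} \leq \sum_{p \leq \sqrt x} \frac{x}{p^2},
\]
where the constraint $P(n) = p$ forces $p^2 \le n \le x$, so $p \le \sqrt x$. The tail of $\sum_p p^{-2}$ beyond any point is summable, but to extract a saving of $(\log x)^2$ I would use the smoothness information more carefully: outside $\E_1(x)$ we may assume $P(n) = p > x^{1/\log_2 x}$, so in bounding the relevant portion the sum runs over $p > x^{1/\log_2 x}$, giving $\sum_{p > x^{1/\log_2 x}} x/p^2 \ll x \cdot (\log_2 x)/x^{1/\log_2 x}$, which decays faster than any power of $\log x$. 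Alternatively, since the two conditions are already being unioned, one can simply bound $\#\E_2(x) \ll x \sum_{p > z} p^{-2} \ll x/z$ with $z = x^{1/\log_2 x}$ on the complement of $\E_1(x)$.

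The main obstacle is quoting the smooth-number bound in exactly the right form for the range $y = x^{1/\log_2 x}$, i.e. $u = \log_2 x \to \infty$ slowly, and confirming that $u^{-u}$ beats $(\log x)^{-2}$; the squarefull-part estimate for $\E_2(x)$ is routine by comparison. I would also take care that the $o(1)$ in the exponent is genuinely negligible for large $x$, which it is since $\log u = \log_3 x$ while $\log\log x = \log_2 x$, so $u \log u = \log_2 x \cdot \log_3 x \gg \log_2 x \gg 2\log_2 x$ eventually.
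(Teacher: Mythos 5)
Your proposal is correct and takes essentially the same route as the paper: the smooth part is handled by de Bruijn's bound $\Psi(x,y)\le x/u^{u+o(u)}$ with $u=\log_2 x$ (which beats any fixed power of $\log x$), and the squarefull condition $P(n)^2\mid n$ is counted only for non-smooth $n$, giving $x\sum_{p>x^{1/\log_2 x}}p^{-2}\ll x e^{-\log x/\log_2 x}\ll x/(\log x)^2$. Your self-correction — noting that the naive bound $\sum_{p\le\sqrt x}x/p^2\ll x$ is useless and that one must exploit $P(n)>x^{1/\log_2 x}$ off the smooth set — is precisely the decomposition the paper's proof uses.
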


To prove Lemma \ref{error set size}, we require the following result of de Bruijn \cite[Theorem 2]{db66}.

\begin{prop}\label{smooth ub}
Let $x \geq y \geq 2$ satisfy $(\log x)^2 \leq y \leq x$. Whenever $u := \frac{\log x}{\log y} \to \infty$, we have
\[
\Psi(x, y) \leq x/u^{u + o(u)}.
\]
\end{prop}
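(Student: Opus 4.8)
\section*{Proof proposal}

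The natural self-contained route is Rankin's method. The starting point is that for any real $\sigma > 0$ each $n \leq x$ with $P(n) \leq y$ satisfies $(x/n)^\sigma \geq 1$, so that
\[
\Psi(x,y) = \sum_{\substack{n \leq x \\ P(n) \leq y}} 1 \leq x^\sigma \sum_{\substack{n \geq 1 \\ P(n) \leq y}} n^{-\sigma} = x^\sigma \prod_{p \leq y} (1 - p^{-\sigma})^{-1},
\]
the Euler product being finite since only the primes $p \leq y$ occur. The entire problem then reduces to choosing $\sigma$ so as to balance the two factors and to estimating the product.

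The plan is to take $\sigma = 1 - \frac{\log u}{\log y}$. This choice is convenient for two reasons. First, it produces the main saving exactly: writing the power as an exponential,
\[
x^\sigma = x \cdot x^{-(\log u)/\log y} = x \exp\Bigl(-\log u \cdot \tfrac{\log x}{\log y}\Bigr) = x\, u^{-u}.
\]
Second, the hypotheses confine $\sigma$ to a harmless range. Since $y \geq (\log x)^2$ we have $\log y \geq 2\log_2 x$, while $u = \log x/\log y$ forces $\log u \leq \log_2 x$; hence $\log y \geq 2\log u$, so $1 - \sigma = \frac{\log u}{\log y} \leq \tfrac12$ and $\tfrac12 \leq \sigma < 1$. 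Keeping $\sigma$ away from $0$ is exactly what prevents the Euler product from blowing up.

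It remains to show the product is $u^{o(u)}$, i.e. that $\log \prod_{p \leq y}(1-p^{-\sigma})^{-1} = o(u\log u)$. Expanding the logarithm gives $\sum_{p \leq y} p^{-\sigma} + O\bigl(\sum_{p \leq y} p^{-2\sigma}\bigr)$, and the dominant piece is $S := \sum_{p \leq y} p^{-\sigma}$ with $\alpha := 1 - \sigma = \frac{\log u}{\log y}$. By partial summation against Chebyshev's bound $\pi(t) \ll t/\log t$, together with the substitution $w = \alpha \log t$ (which converts $\int t^{\alpha-1}/\log t\,dt$ into an exponential integral), the contribution of the large primes is $\ll \frac{y^{\alpha}}{\alpha \log y} = \frac{u}{\log u}$, using $y^\alpha = e^{\alpha \log y} = u$ and $\alpha \log y = \log u$; this is $o(u\log u)$. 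The primes $p \leq e^{1/\alpha}$, together with the prime-power remainder, contribute only $O(\log_2 x)$. Combining with the previous paragraph yields $\Psi(x,y) \leq x\, u^{-u}\exp\bigl(o(u\log u)\bigr) = x/u^{u+o(u)}$.

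The step I expect to be the main obstacle is precisely the last estimate: establishing that $S$ and the prime-power remainder are $o(u\log u)$ \emph{uniformly} as $u \to \infty$. The large-prime main term is unproblematic, but the small-prime contribution is only of size $O(\log_2 x)$, and showing that this is negligible against $u\log u$ is exactly the point at which the interplay between the lower bound $y \geq (\log x)^2$ and the rate at which $u \to \infty$ must be exploited. Controlling these error terms with the required uniformity is the technical heart of de Bruijn's analysis, and is what a fully rigorous proof must supply.
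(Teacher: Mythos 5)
The paper offers no internal proof of this statement: it is quoted as Theorem 2 of de Bruijn's 1966 paper and used as a black box. So what you have written is an attempted proof of the cited external result, and it should be judged on its own terms.

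Your Rankin set-up is the standard one, and the bookkeeping is correct as far as it goes: with $\sigma = 1 - \frac{\log u}{\log y}$ and $\alpha = 1 - \sigma$ one indeed gets $x^{\sigma} = x u^{-u}$, the primes in $(e^{1/\alpha}, y]$ contribute $O(u/\log u)$ to the logarithm of the Euler product, and the primes $p \leq e^{1/\alpha}$ contribute about $\sum_{p \leq e^{1/\alpha}} 1/p = \log(1/\alpha) + O(1) \ll \log_2 x$. But the difficulty you flag in your last paragraph is not a uniformity technicality that ``the interplay between $y \geq (\log x)^2$ and the rate at which $u \to \infty$'' can repair; it is fatal to the method in part of the stated range. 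The hypothesis $y \geq (\log x)^2$ bounds $u$ from \emph{above} (namely $u \leq \log x/(2\log_2 x)$) and gives no lower bound on how fast $u \to \infty$. Take $y = \exp(\log x/\log_3 x)$, which satisfies $y \geq (\log x)^2$ with $u = \log_3 x \to \infty$: then $u \log u = \log_3 x \cdot \log_4 x = o(\log_2 x)$, so your error term $O(\log_2 x)$ is \emph{not} $o(u \log u)$, and your bound degenerates to $\Psi(x,y) \leq x\,u^{-u} (\log x)^{O(1)}$, which is worse than trivial there since $u^{u} = (\log x)^{o(1)}$. Moreover the loss is intrinsic to Rankin's method, not to your estimation: for any $\sigma < 1$ one has $\prod_{p \leq y}(1 - p^{-\sigma})^{-1} \gg \min\bigl(\log y, \tfrac{1}{1-\sigma}\bigr)$, while obtaining the saving $x^{\sigma - 1} \leq u^{-u}$ forces $1 - \sigma \geq \log u/\log y$, so the product is $\gg \log y/\log u$ no matter how $\sigma$ is chosen; once $\log_2 y \gg u \log u$ this factor is not $u^{o(u)}$. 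Hence no choice of $\sigma$ completes your argument in the large-$y$, slowly-growing-$u$ regime; de Bruijn's actual proof (or modern treatments via iteration of Buchstab's identity, or the saddle-point method of Hildebrand--Tenenbaum) supplies genuinely different input there.

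It is worth noting that for the only application in the paper, your proof \emph{is} complete: Lemma \ref{error set size} uses the proposition with $y = x^{1/\log_2 x}$, i.e.\ $u = \log_2 x$, where $\log_2 x = o(u\log u) = o(\log_2 x \cdot \log_3 x)$, and your argument then yields $\Psi(x,y) \leq x \exp(-(1 + o(1)) \log_2 x \log_3 x) \ll x/(\log x)^2$, exactly what the lemma needs. So the Rankin route proves a weaker-range version of the proposition sufficient for this paper, but not the proposition as stated.
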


\begin{proof}[Proof of Lemma \ref{error set size}.]
If $n \in \E(x)$, then either $P(n) \leq x^{1/\log_2 x}$ or $P(n) > x^{1/\log_2 x}$ and $P(n)^2 \mid n$. By Proposition \ref{smooth ub}, the number of $n \leq x$ for which the former possibility holds is $\ll x/(\log x)^2$. The number of $n \leq x$ for which the latter possibility holds is
\[
\ll x \sum_{p > x^{1/\log_2 x}} \frac{1}{p^2} \ll xe^{-\log x/\log_2 x},
\]
and so the count of these $n$ is also $\ll x/(\log x)^2$.
\end{proof}

In the proof of Theorem \ref{main thm sum of two squares}, we make frequent use of a generalization of Mertens's theorem to arithmetic progressions. The version stated below follows from a result proved by Mertens himself (cf. \cite[pp. 41--43, 449--450]{lan53}). Note that although there is a dependence on the modulus $b$ and the residue class $a \mod b$ in the constants in the following two results, we will work exclusively in the case $b = 4$ and $a = 1$ or $a = 3$.

\begin{theorem}\label{additive mertens in progressions}
For fixed $a, b \in \Z$,
\[
\sum_{\substack{p \leq x \\ p \equiv a \mod b}} \frac{1}{p} = \frac{1}{\varphi(b)} \log\log x + c_{a, b} + O_b\bigg(\frac{1}{\log x}\bigg),
\]
where subscripts indicate dependence upon $a$ and $b$.
\end{theorem}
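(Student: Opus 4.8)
The plan is to reduce the sum over a progression to the ordinary Mertens theorem by detecting the condition $p \equiv a \mod b$ with Dirichlet characters. Assuming $\gcd(a,b) = 1$ (if $\gcd(a,b) > 1$ the sum runs over the finitely many primes dividing $b$ and is absorbed into the constant), orthogonality gives
\[
\sum_{\substack{p \leq x \\ p \equiv a \mod b}} \frac{1}{p} = \frac{1}{\varphi(b)} \sum_{\chi \bmod b} \overline{\chi}(a) \sum_{p \leq x} \frac{\chi(p)}{p},
\]
where $\chi(p) = 0$ when $p \mid b$. The principal character $\chi_0$ contributes $\frac{1}{\varphi(b)} \sum_{p \leq x,\, p \nmid b} 1/p$, which by the classical Mertens theorem is $\frac{1}{\varphi(b)} \log\log x$ plus a constant plus $O(1/\log x)$. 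Thus everything reduces to showing that for each \emph{non-principal} $\chi$ the sum $\sum_{p \leq x} \chi(p)/p$ converges to a constant at the rate $O_b(1/\log x)$.

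To handle the non-principal characters I would first pass to the logarithmically weighted sum, which is more directly accessible. Starting from the elementary estimate $\sum_{n \leq x} \Lambda(n)/n = \log x + O(1)$ and decomposing by characters, the remaining task is to bound $\sum_{n \leq x} \chi(n)\Lambda(n)/n$ for non-principal $\chi$. Granting the non-vanishing $L(1,\chi) \neq 0$, a standard argument comparing this partial sum with $-L'/L$ at $s = 1$ (equivalently, differentiating $\log L(s,\chi)$) yields $\sum_{n \leq x} \chi(n)\Lambda(n)/n = O_b(1)$; since prime powers contribute only $O(1)$, one obtains
\[
\sum_{\substack{p \leq x \\ p \equiv a \mod b}} \frac{\log p}{p} = \frac{1}{\varphi(b)} \log x + O_b(1).
\]

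The final step is to recover the target by partial summation. Writing $A(t) = \sum_{p \leq t,\, p \equiv a \mod b} (\log p)/p = \frac{1}{\varphi(b)} \log t + E(t)$ with $E(t) = O_b(1)$, Abel summation gives
\[
\sum_{\substack{p \leq x \\ p \equiv a \mod b}} \frac{1}{p} = \frac{A(x)}{\log x} + \int_2^x \frac{A(t)}{t (\log t)^2}\, dt.
\]
Here the boundary term is $\frac{1}{\varphi(b)} + O_b(1/\log x)$, the main part of the integral is $\frac{1}{\varphi(b)} \int_2^x \frac{dt}{t \log t} = \frac{1}{\varphi(b)} \log\log x + O(1)$, and the integral against $E(t)$ converges absolutely with tail $O_b(1/\log x)$, since $\int_x^\infty \frac{dt}{t(\log t)^2} \ll 1/\log x$. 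Collecting these pieces produces the stated expansion, with $c_{a,b}$ defined by the resulting convergent constants (the Mertens constant together with the $L(1,\chi)$ contributions).

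The main obstacle is precisely the non-vanishing $L(1,\chi) \neq 0$ for non-principal $\chi$ modulo $b$: if it failed, the character sum $\sum_{n \leq x} \chi(n)\Lambda(n)/n$ would itself grow like $\log x$ and corrupt the main term. For the only modulus needed in this paper, $b = 4$, there is a single non-principal character, the real character $\chi$ with $L(1,\chi) = \pi/4 \neq 0$, so the crucial input is elementary and the whole argument is self-contained; for general $b$ one invokes the standard non-vanishing of $L(1,\chi)$.
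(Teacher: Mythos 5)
Your proposal is correct, but be aware that the paper does not prove this statement at all: it is quoted as a classical result, with a citation to Mertens' own argument as presented in Landau's book. So there is no internal proof to compare against; what you have reconstructed is essentially the standard character-theoretic proof of Mertens' theorem for arithmetic progressions, and it assembles correctly. The orthogonality decomposition, the treatment of the principal character by classical Mertens, the reduction of the non-principal characters to $\sum_{n \leq x} \chi(n)\Lambda(n)/n = O_b(1)$ via $L(1,\chi) \neq 0$, and the closing partial summation (whose tail $\int_x^\infty \frac{dt}{t(\log t)^2} \ll 1/\log x$ is exactly what produces the stated $O_b(1/\log x)$ error) all fit together as claimed. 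Two minor remarks. First, your parenthetical about $\gcd(a,b) > 1$ is not quite right: in that degenerate case the left-hand side is $O(1)$, so the asserted asymptotic, which retains the main term $\frac{1}{\varphi(b)}\log\log x$, simply fails; the theorem must be read with $(a,b) = 1$, which is all the paper uses ($a \in \{1,3\}$, $b = 4$). Second, the step you describe as ``comparing with $-L'/L$ at $s = 1$'' is where the real work hides; the clean elementary version writes $\sum_{n \leq x} \chi(n)(\log n)/n = \sum_{d \leq x} \frac{\chi(d)\Lambda(d)}{d} \sum_{m \leq x/d} \frac{\chi(m)}{m}$, notes that the left side is $O_b(1)$ by Abel summation against bounded character sums, and uses $\sum_{m \leq y} \chi(m)/m = L(1,\chi) + O_b(1/y)$, so that $L(1,\chi) \neq 0$ forces $\sum_{d \leq x} \chi(d)\Lambda(d)/d = O_b(1)$. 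As you observe, for the only modulus the paper needs, $b = 4$, the nonvanishing is elementary ($L(1,\chi_{-4}) = \pi/4$), so your argument is self-contained for the paper's purposes.
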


Using the fact that $\log(1 - 1/p) = -\frac{1}{p} + O(1/p^2)$, one quickly deduces the following corollary. Though stronger versions of this theorem exist in the literature (e.g. \cite{wil74}), this weak result is sufficient for us.

\begin{cor}\label{mult mertens in progressions}
For fixed $a, b \in \Z$,
\[
\prod_{\substack{p \leq x \\ p \equiv a \mod b}} \bigg(1 - \frac{1}{p} \bigg) \asymp \frac{1}{(\log x)^{1/\varphi(b)}},
\]
where the constants implied by the $\asymp$ symbol depend only on $a$ and $b$.
\end{cor}

Let $n$ be a natural number. Suppose $s(n)$ is a sum of two squares. Then by a classical theorem of Fermat, if $R$ is the largest factor of $s(n)$ supported on the primes $p \equiv 3 \mod 4$, then $R$ is a square. The following lemma plays a crucial role in the proof of the upper bound implicit in Theorem \ref{main thm sum of two squares}. It provides an upper bound on the number of $n \leq x$ whose prime factors belong to a certain interval $[z, y]$, with the possible exception of primes $p \leq z$ dividing this square factor $R$.

\begin{lemma}\label{smooth lemma 3 mod 4}
Let $x \geq y \geq z \geq 2$, and let $R$ be a positive integer. Let $\Psi(x)$ denote the count of numbers $n \leq x$ such that $n$ is $y$-smooth and such that, if $p \mid n$ with $p \leq z$ and $p \equiv 3 \mod 4$, then $p \mid R$. Then
\[
\Psi(x) \ll \frac{x}{\sqrt{\log z}} e^{-u/2} \frac{R}{\varphi(R)}, \quad \text{where} \quad u := \frac{\log x}{\log y}.
\]
\end{lemma}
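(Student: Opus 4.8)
The plan is to bound $\Psi(x)$ by a Rankin-type upper bound combined with a careful treatment of the primes $p \equiv 3 \pmod 4$ in the interval $[2,z]$, for which divisibility is restricted to those $p \mid R$. First I would invoke the standard Rankin trick to exploit the $y$-smoothness: for a parameter $\alpha \in (0,1)$ to be chosen, every $n$ counted by $\Psi(x)$ satisfies $(x/n)^{\alpha} \geq y^{-\alpha \cdot (\text{number of prime factors})}$ type considerations, so that
\[
\Psi(x) \leq x^{\alpha} \sum_{\substack{n \text{ admissible}}} \frac{1}{n^{\alpha}} \leq x^{\alpha} \prod_{\substack{p \leq y}} \bigg(1 - \frac{1}{p^{\alpha}}\bigg)^{-1},
\]
where the product ranges only over the primes that $n$ is permitted to use — namely, all primes $p \leq y$ \emph{except} those $p \leq z$ with $p \equiv 3 \pmod 4$ and $p \nmid R$, which are forbidden. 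The factor $x^{\alpha} = x \cdot e^{-(1-\alpha)\log x}$, and with the natural choice $\alpha = 1 - 1/\log y$ one gets $x^{\alpha} \ll x \cdot e^{-u}$ after noting $p^{\alpha} \asymp p$ throughout the relevant range; this is where the $e^{-u}$ (and ultimately the $e^{-u/2}$) will originate.

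Next I would estimate the Euler product. Splitting the primes into those $\leq z$ and those in $(z,y]$, and using $\log(1-p^{-\alpha})^{-1} = p^{-\alpha} + O(p^{-2\alpha})$ together with $p^{-\alpha} = p^{-1} p^{(1-\alpha)} \asymp p^{-1}$ for $p \leq y$, the product over the \emph{allowed} primes up to $z$ behaves like
\[
\exp\bigg( \sum_{\substack{p \leq z}} \frac{1}{p} \;-\; \sum_{\substack{p \leq z \\ p \equiv 3 \,(4)\\ p \nmid R}} \frac{1}{p} \bigg).
\]
The full sum $\sum_{p \leq z} 1/p$ contributes $\log\log z$, while the subtracted sum over the forbidden primes $p \equiv 3 \pmod 4$ with $p \nmid R$ equals, by Theorem 1.6 with $b=4$, $a=3$, the quantity $\tfrac{1}{2}\log\log z$ minus the contribution $\sum_{p \mid R,\, p \equiv 3 (4)} 1/p$ of the primes we are \emph{allowed} to use because they divide $R$. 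The net effect is that the allowed primes $p \equiv 3 \pmod 4$ up to $z$ contribute only $\tfrac12 \log\log z$ instead of the full $\log\log z$, which upon exponentiation produces the savings $(\log z)^{1/2}$ in the denominator; the exceptional primes dividing $R$ contribute $\exp\big(\sum_{p \mid R} 1/p\big) \ll R/\varphi(R)$, recovering precisely that factor.

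For the primes in the range $(z,y]$, all primes are allowed, so their contribution is $\exp(\sum_{z < p \leq y} 1/p) \asymp (\log y / \log z)^{1/2}\cdot(\log y/\log z)^{1/2} = \log y/\log z$ up to constants — more carefully, the full Mertens sum over $(z,y]$ gives a factor $\asymp \log y/\log z$, but this is absorbed once combined with the $e^{-u}$ savings. The cleanest route is to carry the range $(z,y]$ contribution as $\ll (\log y/\log z)^{O(1)}$ and observe that it is dominated by half of the exponential gain: writing $e^{-u} \cdot (\log y/\log z)^{C} = e^{-u}\cdot e^{C(\log_2 y - \log_2 z)}$ and using $u = \log x/\log y$, one checks that splitting $e^{-u} = e^{-u/2}\cdot e^{-u/2}$ and absorbing the polynomial-in-logs factor into one copy of $e^{-u/2}$ leaves the stated bound $\tfrac{x}{\sqrt{\log z}} e^{-u/2} \tfrac{R}{\varphi(R)}$. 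The main obstacle I anticipate is precisely this bookkeeping: tracking how the Rankin exponent $\alpha$ interacts with each of the three prime ranges simultaneously — ensuring that $p^{1-\alpha} = 1 + O((1-\alpha)\log p)$ stays controlled up to $p = y$ so that the Mertens estimates apply with the genuine $1/p$, while still extracting a clean power of $e^{-u}$ from the $x^{\alpha}$ factor rather than something weaker. Verifying that the constant $C$ from the $(z,y]$ range can indeed be swallowed by $e^{-u/2}$ uniformly in the ranges of $x,y,z$ allowed by the hypotheses is the delicate point that makes the factor $e^{-u/2}$ (rather than $e^{-u}$) appear in the final bound.
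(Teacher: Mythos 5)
There is a genuine gap, and it sits exactly at the point you yourself flag as ``the delicate point'': the claimed absorption of the $(z,y]$-range contribution into $e^{-u/2}$ is false in the main range of the lemma. Your Rankin bound with $\alpha = 1 - 1/\log y$ gives
\[
\Psi(x) \;\ll\; x e^{-u} \prod_{p\ \text{allowed}} \bigl(1 - p^{-\alpha}\bigr)^{-1} \;\asymp\; x e^{-u}\,\frac{\log y}{\sqrt{\log z}}\,\frac{R}{\varphi(R)},
\]
since $\sum_{p \leq y} p^{-\alpha} = \log_2 y + O(1)$. To reach the stated bound you would need $e^{-u}\log y \ll e^{-u/2}$, i.e.\ $\log y \ll e^{u/2}$; writing $v = \log y$ this forces roughly $v \log v \ll \log x$, i.e.\ $y \leq x^{O(1/\log_2 x)}$. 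But the lemma is stated for all $2 \leq z \leq y \leq x$ and is applied in Section 4 with $y = x^{1-\lambda}$ and $z = \sqrt{\log_2 x}$, where $u \asymp \lambda/(1-\lambda)$ can be bounded (even $o(1)$): there $e^{-u}$ and $e^{-u/2}$ are both $\asymp 1$ while $\log y \asymp \log x$, so your bound is off by a full factor of $\log x$, which would destroy the final estimate $\ll x/(\log x)^{3/2}$ per value of $\lambda$. The loss is not bookkeeping but structural: for every $\sigma \leq 1$ one has $\prod_{p \leq y}(1 - p^{-\sigma})^{-1} \geq \prod_{p \leq y}(1 - 1/p)^{-1} \gg \log y$, so the classical Rankin product can never shed this factor when $u \asymp 1$.

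The paper's proof avoids this with a different mechanism. It takes the shift \emph{tiny}, $\alpha = 2/(3\log y)$, writes $\Psi(x) \leq x^{3/4} + x^{-3\alpha/4}\sum_{n \leq x}\chi(n)n^{\alpha}$ (the factor $x^{-3\alpha/4} = e^{-u/2}$ is extracted here, not from an Euler product), and bounds the remaining sum by the Halberstam--Richert mean-value inequality (Lemma \ref{smooth lemma ingredient}), which gives $\ll x \exp\bigl(\sum_{p \leq x}(f(p)-1)/p\bigr)$ for $f(n) = \chi(n)n^{\alpha}$. The crucial feature is the $-1$ in the numerator: the allowed primes contribute only $\sum_{p \leq y}(p^{\alpha}-1)/p \ll \alpha \log y \ll 1$ rather than $\asymp \log_2 y$, while the forbidden primes $p \leq z$, $p \equiv 3 \pmod 4$, $p \nmid R$ contribute $\exp\bigl(-\sum 1/p\bigr) \ll (\log z)^{-1/2}\, R/\varphi(R)$ by Mertens in progressions --- your treatment of this last piece matches the paper's and is fine. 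So the missing idea is a mean-value device of Halberstam--Richert/Wirsing type that eliminates the $\prod_{p \leq y}(1-1/p)^{-1}$-type loss inherent in the classical Rankin product; without it, the stated bound is out of reach in exactly the regime where the lemma is needed.
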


A key ingredient in the proof of Lemma \ref{smooth lemma 3 mod 4} is a result of Halberstam and Richert \cite{hr74}, which we state in precisely the same form as \cite[Lemma 2.4]{pol14} (which itself is derived from \cite[Corollary 5.1, p. 309]{ten95}).

\begin{lemma}\label{smooth lemma ingredient}
Let $f$ be a real-valued, nonnegative multiplicative function. Suppose there are positive constants $\lambda_1$ and $\lambda_2$, with $\lambda_2 < 2$, so that $f(p_k) \leq \lambda_1\lambda_2^{k-1}$ for all prime powers $p^k$. Then for all $x \geq 1$,
\[
\sum_{n \leq x} f(n) \ll_{\lambda_1, \lambda_2} x \exp \bigg( \sum_{p \leq x} \frac{f(p) - 1}{p} \bigg).
\]
\end{lemma}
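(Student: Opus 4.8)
The plan is to reduce the stated bound to two cleaner estimates. The first is an upper bound for the logarithmically weighted mean $\sum_{n \leq x} f(n)/n$ in terms of the Euler product attached to $f$; the second is a Chebyshev-type conversion of this logarithmic mean into the natural mean $\sum_{n \leq x} f(n)$, gaining a factor of $1/\log x$. The bridge between these two steps and the precise shape of the claim is Mertens's theorem: since $\sum_{p \leq x} 1/p = \log\log x + O(1)$, one has
\[
\exp\Bigl(\sum_{p \leq x} \frac{f(p)}{p}\Bigr) \asymp (\log x)\exp\Bigl(\sum_{p \leq x}\frac{f(p) - 1}{p}\Bigr),
\]
so that a bound $\sum_{n \leq x} f(n)/n \ll \exp(\sum_{p \leq x} f(p)/p)$ together with $\sum_{n \leq x} f(n) \ll \frac{x}{\log x}\sum_{n \leq x} f(n)/n$ yields exactly the asserted inequality. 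I expect the hypothesis $\lambda_2 < 2$ to enter precisely in guaranteeing convergence of the local Euler factor at the prime $p = 2$; this is the one place where the constant $2$ (rather than $1$) is essential.

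For the first step I would dominate the logarithmic mean by the full Euler product. Since $f \geq 0$ is multiplicative and every $n \leq x$ is composed of primes $\leq x$, nonnegativity of all terms gives
\[
\sum_{n \leq x}\frac{f(n)}{n} \leq \prod_{p \leq x}\Bigl(1 + \frac{f(p)}{p} + \frac{f(p^2)}{p^2} + \cdots\Bigr).
\]
The bound $f(p^k) \leq \lambda_1\lambda_2^{k-1}$ with $\lambda_2 < 2$ makes each local factor finite, with tail $\sum_{k \geq 2} f(p^k)/p^k \ll 1/p^2$ uniformly. Taking logarithms and using $\log(1 + u) \leq u$ shows the logarithm of the product is $\leq \sum_{p \leq x} f(p)/p + O(1)$, whence $\sum_{n \leq x} f(n)/n \ll \exp(\sum_{p \leq x} f(p)/p)$, with implied constant depending only on $\lambda_1, \lambda_2$.

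The second step is the heart of the argument. Writing $A(x) = \sum_{n \leq x} f(n)$, I would start from the identity
\[
A(x)\log x = \sum_{n \leq x} f(n)\log n + \int_1^x \frac{A(t)}{t}\,dt,
\]
expand $\log n = \sum_{p^k \mid n}\log p$, and regroup to obtain
\[
\sum_{n \leq x} f(n)\log n \leq \sum_p (\log p)\sum_{j \geq 1} j\, f(p^j)\sum_{m \leq x/p^j} f(m),
\]
where once again $\lambda_2 < 2$ ensures that the weighting $\sum_j j\,f(2^j)/2^j$ over powers of $2$ converges. This turns the identity into a self-referential inequality for $A$: the $j = 1$ contribution reproduces a main term of size controlled by $A(x)\sum_{p \leq x}\frac{f(p)\log p}{p}$, while higher prime powers together with the integral $\int_1^x A(t)/t\,dt$ form a correction. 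I would close this inequality by induction on (a discretization of) $x$, with inductive hypothesis $A(t) \ll \frac{t}{\log t}\exp(\sum_{p \leq t} f(p)/p)$, thereby establishing the conversion $A(x) \ll \frac{x}{\log x}\sum_{n \leq x} f(n)/n$ and, combined with the first step, the lemma.

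The main obstacle will be closing this induction uniformly. Two points require care: (i) bounding the contribution of the higher prime powers ($j \geq 2$) and of the integral term without losing the gain of $1/\log x$; and (ii) verifying that the inductive constant does not blow up. For the latter one must check, after partial summation and an appeal to $\sum_{p \leq x}\frac{\log p}{p} = \log x + O(1)$, that the main term's weight $\sum_{p \leq x}\frac{f(p)\log p}{p}$ combines correctly with the factor $\log x$ on the left, so that the resulting inequality can be solved for $A(x)$ with an implied constant depending only on $\lambda_1$ and $\lambda_2$. This balancing of the main term against the correction is where the hypothesis $\lambda_2 < 2$ does its real work and where the argument is most delicate.
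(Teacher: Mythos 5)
First, a point of reference: the paper does not prove this lemma at all --- it quotes it from Halberstam--Richert, in the form given in \cite[Lemma 2.4]{pol14}, which in turn is derived from \cite[Corollary 5.1, p.\ 309]{ten95}. So the comparison must be with the standard literature proof. Your overall skeleton matches it exactly: bound the logarithmic mean $\sum_{n \leq x} f(n)/n$ by the Euler product (your step 1 is correct as written, and your observation that $\lambda_2 < 2$ is what makes the local factor at $p = 2$ converge is accurate), then convert to the natural mean with a gain of $1/\log x$, then bridge with Mertens via $\exp(\sum_{p \leq x} f(p)/p) \asymp (\log x)\exp(\sum_{p \leq x}(f(p)-1)/p)$. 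Steps 1 and 3 are fine.

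The genuine gap is in your step 2, and it is exactly where you flagged delicacy: the induction does not close. After fixing $p^j$ and bounding the inner sum by $A(x/p^j)$, the $j = 1$ term is $\sum_{p \leq x} f(p)(\log p)\, A(x/p)$ --- note, with no $1/p$; the $1/p$ only appears after you insert the inductive hypothesis $A(x/p) \leq K\frac{x/p}{\log(x/p)}\exp(\sum_{q \leq x/p} f(q)/q)$, and then the denominator $\log(x/p)$ destroys the uniform gain for $p$ near $x$. A test case makes the failure concrete: take $f(p) = \lambda_1$ for all $p$. Then $A(t) \asymp t(\log t)^{\lambda_1 - 1}$, and the $j = 1$ term is
\[
\lambda_1 \sum_{p \leq x/2} \frac{x \log p}{p} \bigl(\log(x/p)\bigr)^{\lambda_1 - 1} \asymp x (\log x)^{\lambda_1} \cdot \lambda_1 \int_0^1 (1-t)^{\lambda_1 - 1}\,dt = x(\log x)^{\lambda_1},
\]
which is the same order as the left-hand side $A(x)\log x$, with inductive coefficient essentially $K \cdot 1$ rather than $o(K)$. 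So solving the self-referential inequality for $A(x)$ gives nothing: the ``correction'' is a full-sized main term. The standard proof avoids induction entirely by performing the swap the other way: write $n = mp^j$ with $p \nmid m$, so that
\[
\sum_{n \leq x} f(n)\log n \leq \sum_{m \leq x} f(m) \sum_{p^j \leq x/m} f(p^j)\log p^j \ll_{\lambda_1,\lambda_2} x \sum_{m \leq x} \frac{f(m)}{m},
\]
where the inner bound is the Chebyshev-type estimate $\sum_{p^j \leq t} f(p^j)\log p^j \ll t$: the $j = 1$ part is $\leq \lambda_1 \theta(t) \ll t$, and the higher prime powers contribute $\ll \sum_{j \geq 2} j\lambda_2^{j-1} t^{1/j} = o(t)$, the worst case being $p = 2$ with $j$ up to $\log t/\log 2$, giving $t^{\log \lambda_2/\log 2}$ --- this, rather than (only) the Euler factor at $2$, is where $\lambda_2 < 2$ does its real work. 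Combined with $\sum_{n \leq x} f(n)\log(x/n) \leq x\sum_{n \leq x} f(n)/n$ (from $\log u \leq u$), one gets $A(x)\log x \ll x\sum_{n \leq x} f(n)/n$ directly, with no induction and no constant-balancing. With that replacement for your step 2, your argument becomes the literature proof.
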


\begin{proof}[Proof of Lemma \ref{smooth lemma 3 mod 4}.]
Let $\chi(n)$ denote the characteristic function of those $n$ counted by $\Psi$. Then for any $\alpha > 0$,
\begin{align*}
\Psi(x) = \sum_{n \leq x} \chi(n) &\leq x^{3/4} + \sum_{x^{3/4} < n \leq x} \chi(n) \\
&\leq x^{3/4} + x^{-3\alpha/4} \sum_{n \leq x} \chi(n)n^\alpha.
\end{align*}
Let $\alpha = 2/(3\log y)$. We estimate this final sum by applying Lemma \ref{smooth lemma ingredient} with $f(n) = \chi(n)n^\alpha$. Notice that $f(p^k) = 0$ if $p > y$ or if $p < z$ with $p \equiv 3 \mod 4$ and $p \nmid R$, and $f(p^k) = p^{k\alpha}$ if $p \in [z, y]$ or if $p < z$ with $p \equiv 3 \mod 4$ and $p \mid R$. Since $p^{k\alpha} \leq \exp(2k/3)$, the hypotheses of Lemma \ref{smooth lemma ingredient} are satisfied with $\lambda_1 = \lambda_2 = \exp(2/3) < 2$. Thus,
\begin{align}\label{psi lemma eqn 1}
\sum_{n \leq x} \chi(n)n^\alpha &\ll x\exp\bigg( -\sum_{\substack{p \leq z \\ p \equiv 3 \mod 4 \\ p \nmid R}} \frac{1}{p} \bigg) \exp\bigg( \sum_{p \leq y} \frac{p^\alpha - 1}{p} \bigg) \nonumber \\
&\ll \frac{x}{\sqrt{\log z}} \exp\bigg( \sum_{p \mid R} \frac{1}{p} \bigg) \exp\bigg( \sum_{p \leq y} \frac{p^\alpha - 1}{p} \bigg)
\end{align}
using Theorem \ref{additive mertens in progressions}. Now,
\[
\exp\bigg( \sum_{p \mid R} \frac{1}{p} \bigg) \leq \exp\bigg(- \sum_{p \mid R} \log\bigg(1 - \frac{1}{p}\bigg)\bigg) = \prod_{p \mid R} \bigg(1 - \frac{1}{p}\bigg)^{-1} = \frac{R}{\varphi(R)}.
\]
For the second exponential factor in \eqref{psi lemma eqn 1}, we have that $\alpha\log p \ll 1$ for $p \leq y$, and so $p^\alpha - 1 \ll \alpha \log p$. Therefore, $\sum_{p \leq y} \frac{p^\alpha - 1}{p} \ll \alpha \sum_{p \leq y} \frac{\log p}{p} \ll \alpha \log y \ll 1$. Putting it all together, we have shown
\begin{align}\label{psi almost finished}
\Psi(x) \ll x^{3/4} + x^{-3\alpha/4}\frac{x}{\sqrt{\log z}} \frac{R}{\varphi(R)}.
\end{align}
With $u$ and $\alpha$ as previously defined, we have $x^{-3\alpha/4} = e^{-u/2}$. Also, if $y \geq 11$, then since $x \geq z$,
\[
\frac{x}{\sqrt{\log z}} e^{-u/2} \geq \frac{x}{\log x} x^{-1/(2\log 11)} \gg x^{0.79}.
\]
(If $y \leq 11$, then $\Psi(x) = O((\log x)^4)$.) Thus the second term in \eqref{psi almost finished} dominates, and the proof is complete.
\end{proof}

Next, we present a lemma which facilitates the proof of the upper bound implicit in Theorem \ref{main thm sum of two squares} by allowing us to assume that the square factor $R$ of $s(n)$ is not too big.

\begin{lemma}\label{square factor size}
Let $n \leq x$ with $n \notin \E(x)$, so that $n = mP$ with $P > x^{1/\log_2x}$ and $P \nmid m$. Let $R$ be the largest divisor of $s(n)$ supported on the primes $p \equiv 3 \mod 4$ not exceeding $(x/m)^{1/10}$. The number of such $n \leq x$ with $R > (\log x)^2$ is $o(x/\sqrt{\log x})$.
\end{lemma}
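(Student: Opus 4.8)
Throughout this estimate we are inside the computation of $B_s(x)$, so every $n$ under consideration has the property that $s(n)$ is a sum of two squares. By the theorem of Fermat recalled before the statement, this forces the largest factor of $s(n)$ supported on primes $\equiv 3 \bmod 4$ to be a perfect square, and in particular $R = d^2$ for an integer $d$ all of whose prime factors are $\equiv 3 \bmod 4$ and at most $(x/m)^{1/10}$. The condition $R > (\log x)^2$ becomes simply $d > \log x$, and $d^2 \mid s(n) = Ps(m) + \sigma(m)$. This square structure is not an incidental convenience but the crux of the matter: for a typical $n$ the part of $s(n)$ supported on primes $\equiv 3 \bmod 4$ up to $(x/m)^{1/10}$ already has size about $(x/m)^{1/20}$, so it is genuinely the membership of $s(n)$ in the sums of two squares --- in force throughout the estimation of $B_s(x)$ --- that makes the count of $n$ with $R > (\log x)^2$ small.

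The plan is to fix $m$ and count the admissible primes $P$. First I would trim $d$ to a workable size: writing $d$ as a product of its prime factors, each at most $(x/m)^{1/10}$, and taking the least partial product exceeding $\log x$, one obtains a divisor $e \mid d$ with $\log x < e \leq (\log x)(x/m)^{1/10}$, still supported on primes $\equiv 3 \bmod 4$ below $(x/m)^{1/10}$, and with $e^2 \mid d^2 = R \mid s(n)$. For large $x$ one checks $e^2 \leq (x/m)^{1/4}$, since $x/m \geq P > x^{1/\log_2 x}$ makes $(x/m)^{1/20}$ dominate $(\log x)^2$; thus the modulus $e^2$ is a small power of the length $x/m$ of the range of $P$. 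For fixed $m$ and $e$, the congruence $s(m)P \equiv -\sigma(m) \pmod{e^2}$ confines $P$ to at most $\gcd(s(m), e^2)$ residue classes modulo $e^2$, so Brun--Titchmarsh yields
\[
\#\{P \leq x/m : e^2 \mid s(mP)\} \ll \frac{\gcd(s(m), e^2)}{\varphi(e^2)} \cdot \frac{x/m}{\log(x/m)}.
\]

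Summing over $e > \log x$ is where the square pays off: because the modulus is $e^2$, the tail $\sum_{e > \log x} 1/\varphi(e^2) \ll (\log_3 x)/\log x$ converges, whereas the analogous sum of $1/\varphi(R)$ over $(3 \bmod 4)$-smooth $R > (\log x)^2$ diverges like $\log x$ --- exactly the discrepancy flagged in the previous paragraph. Summing next over $m < x^{1 - 1/\log_2 x}$ and using $\sum_m 1/(m\log(x/m)) \ll \log_3 x$, the total (ignoring the gcd factor) is
\[
\ll \frac{x}{\log x}\,(\log_3 x)^2 = \frac{x}{\sqrt{\log x}}\cdot\frac{(\log_3 x)^2}{\sqrt{\log x}} = o\!\left(\frac{x}{\sqrt{\log x}}\right),
\]
which is the claimed bound.

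The main obstacle is the factor $\gcd(s(m), e^2)$, which has to be controlled uniformly in $m$. The key observation here is that if a prime $q$ divides both $e$ and $s(m)$, then $q \mid s(n) = Ps(m) + \sigma(m)$ forces $q \mid \sigma(m)$, and since $\sigma(m) = s(m) + m$ this gives $q \mid m$; hence every common prime factor of $e$ and $s(m)$ already divides $m$. This sharply limits the $n$ for which the gcd is large, and I expect that peeling off these $n$ --- while the generic case $\gcd(s(m), e^2) = 1$ obeys the clean bound above --- costs at most a further power of $\log_3 x$, still comfortably $o(x/\sqrt{\log x})$. Making this last piece of bookkeeping rigorous, rather than the main-term estimate, is the part of the argument I expect to require the most care.
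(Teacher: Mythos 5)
Your main-term analysis is sound and is, in essence, the paper's own argument: the paper likewise trims the large square $R$ down to a square divisor $S$ of $s(n)$ in a controlled window ($(\log x)^2 < S \leq (x/m)^{1/4}$, obtained by stripping $p^2$ factors rather than your prime-by-prime trimming of $d$ --- the same device), places $P$ in a residue class modulo (the coprime part of) that square, applies Brun--Titchmarsh, and wins because $\sum 1/\varphi(e^2)$ converges past $e = \log x$. Your subsidiary estimates $\sum_{e > \log x} 1/\varphi(e^2) \ll (\log_3 x)/\log x$ and $\sum_{m \leq x^{1-1/\log_2 x}} \frac{1}{m \log(x/m)} \ll \log_3 x$ are both correct; the paper is in fact slightly cruder at this point (it uses $\log(x/m) \gg \log x/\log_2 x$ and ends with $x/(\log x)^{1+o(1)}$), so no complaint there.

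The genuine gap is exactly the step you flagged, and your proposed remedy --- ``peeling off'' the $n$ with $\gcd(s(m), e^2) > 1$ as an exceptional set --- would fail as stated: the gcd can be a single small prime $q \equiv 3 \bmod 4$ (e.g.\ $q = 3$), and the $m$ with $q \mid \gcd(m, \sigma(m))$ then form a positive proportion of all $m$, so these $n$ cannot be discarded by a density bound. The correct move, which is what the paper does with its decomposition $S = S_1 S_2$, $S_1 = \gcd(S, s(m))$, is to exploit your own observation \emph{inside} the main count rather than as a criterion for exclusion. Concretely: solvability of $Ps(m) \equiv -\sigma(m) \pmod{e^2}$ forces $g := \gcd(e^2, s(m))$ to divide $\sigma(m)$, hence $g \mid \sigma(m) - s(m) = m$ (the full-gcd form of your prime-level remark). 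So for fixed $g$ you may restrict the $m$-sum to multiples of $g$, gaining a factor $1/g$ that exactly cancels the $g$ residue classes modulo $e^2$ (equivalently, $\varphi(e^2/g) \geq \varphi(e^2)/g$). Summing over the possible values $g \mid e^2$ then costs
\[
\sum_{g \mid e^2} \frac{1}{g\,\varphi(e^2/g)} \;=\; \frac{1}{e^2} \sum_{h \mid e^2} \frac{h}{\varphi(h)} \;\leq\; \frac{\tau(e^2)}{\varphi(e^2)},
\]
and since $\tau(e^2) = e^{o(1)}$, the sum over $e > \log x$ still converges to $(\log x)^{-1+o(1)}$, for a total $\ll x\,(\log x)^{-1+o(1)} = o(x/\sqrt{\log x})$. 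With this replacement for your final paragraph, your proof is complete and coincides with the paper's.
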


\begin{proof}
Suppose $n = mP$ is as above with $R > (\log x)^2$. Then $R$ is a square. Since every prime factor of $R$ is at most $(x/m)^{1/10}$, if $R > (x/m)^{1/4}$, then by removing factors of the form $p^2$ from $R$ one at a time, we will eventually obtain a square factor of $s(n)$ that is at most $(x/m)^{1/4}$ but greater than $(x/m)^{1/4 - 2/10} = (x/m)^{1/20} > (\log x)^2$. So, if $R > (\log x)^2$, there is a square factor $S$ of $s(n)$ with $(\log x)^2 < S \leq (x/m)^{1/4}$.

Write $S = S_1 S_2$, where $S_1 = (S, s(m))$ and $S_2 = S/S_1$. We count the number of $n \leq x$ corresponding to a fixed factorization $S = S_1S_2$, then sum on possible factorizations. Since $S \mid s(n) = Ps(m) + \sigma(m)$, $S_1 \mid \sigma(m)$, and so $S_1 \mid \sigma(m) - s(m) = m$. Also, $S_2 = S/(S, s(m))$ is coprime to $s(m)/(S, s(m)) = s(m)/S_1$, and so the congruence
\[
P\frac{s(m)}{S_1} \equiv -\frac{\sigma(m)}{S_1} \mod{S_2}
\]
places $P$ in a uniquely determined residue class modulo $S_2$. Note also that $P \leq x/m$, which means that $x/m > x^{1/\log_2 x}$. By the Brun--Titchmarsh inequality \cite[Theorem 3.8, p. 110]{hr74}, the number of choices for $P$ given $m$ is
\[
\ll \frac{x/m}{\varphi(S_2)\log(\frac{x}{mS_2})} \ll \frac{x \log_2 x}{m \varphi(S_2) \log x},
\]
since $\frac{x}{mS_2} \geq \frac{x}{mS} \geq (\frac{x}{m})^{3/4} \geq x^{3/4\log_2 x}$ (here we use the fact that $S \leq (x/m)^{1/4}$). Summing on $m \leq x^{1 - 1/\log_2x}$ that are multiples of $S_1$ shows that the number of possible values of $n = mP$ corresponding to the decomposition $S = S_1S_2$ is
\[
\ll \frac{x \log_2 x}{S_1 \varphi(S_2)}.
\]
We sum now on decompositions $S = S_1 S_2$. Using the fact that $S_2/\varphi(S_2) \leq S/\varphi(S)$,
\[
\sum_{S_1S_2 = S} \frac{1}{S_1 \varphi(S_2)} = \frac{1}{q} \sum_{S_2 \mid S} \frac{S_2}{\varphi(S_2)} \leq \frac{1}{S} \bigg( \tau(S) \frac{S}{\varphi(S)} \bigg) = \frac{\tau(S)}{\varphi(S)},
\]
for a total count of $n \leq x$ that is
\[
\ll x \log_2 x \frac{\tau(S)}{\varphi(S)}.
\]
Summing this quantity over squares $S > (\log x)^2$ gives a final upper bound of
\[
\ll x/(\log x)^{1 + o(1)} = o(x/\sqrt{\log x}).
\]
\end{proof}

We conclude Section 2 with three lemmas, which are Lemmas 2.1, 2.2 and 2.7, respectively, of \cite{pol14}. We refer the reader to that paper for their proofs.

\begin{lemma}\label{paul's 2.1}
Let $x \geq 3$, and let $q$ be a positive integer. The number of $n \leq x$ for which $q \nmid \sigma(n)$ is
\[
\ll x/(\log x)^{1/\varphi(q)}
\]
uniformly in $q$.
\end{lemma}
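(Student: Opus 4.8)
The plan is to majorize the counting function by the summatory function of a multiplicative function and then apply Lemma \ref{smooth lemma ingredient}. The key observation is the following: if a prime $p \equiv -1 \mod{q}$ divides $n$ to exactly the first power, then, writing $n = pm$ with $\gcd(p,m) = 1$ and using that $\sigma$ is multiplicative, we have $\sigma(n) = (p+1)\sigma(m)$, and $q \mid p + 1$ forces $q \mid \sigma(n)$. Contrapositively, if $q \nmid \sigma(n)$, then every prime $p \equiv -1 \mod{q}$ dividing $n$ must in fact satisfy $p^2 \mid n$. Let $f$ be the indicator function of this last property. Because, for coprime integers, the primes dividing the product to exactly the first power are precisely the union of those dividing each factor to exactly the first power, $f$ is multiplicative; explicitly, $f(p) = 0$ when $p \equiv -1 \mod{q}$, $f(p) = 1$ for all other primes, and $f(p^k) = 1$ for every $k \ge 2$. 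The observation above yields
\[
\#\{n \le x : q \nmid \sigma(n)\} \le \sum_{n \le x} f(n).
\]

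Next I would apply Lemma \ref{smooth lemma ingredient}. Since $0 \le f(p^k) \le 1$ for all prime powers, its hypotheses hold with $\lambda_1 = \lambda_2 = 1 < 2$, and the implied constant is absolute. Hence
\[
\sum_{n \le x} f(n) \ll x \exp\bigg( \sum_{p \le x} \frac{f(p) - 1}{p} \bigg) = x \exp\bigg( - \sum_{\substack{p \le x \\ p \equiv -1 \mod{q}}} \frac{1}{p} \bigg),
\]
so the whole estimate reduces to a lower bound for the Mertens sum over the residue class $-1 \mod{q}$. Theorem \ref{additive mertens in progressions}, applied with $a = -1$ and $b = q$, gives this sum as $\frac{1}{\varphi(q)}\log\log x + O(1)$, whence $\sum_{n \le x} f(n) \ll x/(\log x)^{1/\varphi(q)}$, as desired.

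The step I expect to be the main obstacle is uniformity in $q$. Theorem \ref{additive mertens in progressions} is stated for a fixed modulus, with both the constant term $c_{a,b}$ and the error term depending on $b$; to conclude uniformly I would instead need the lower bound
\[
\sum_{\substack{p \le x \\ p \equiv -1 \mod{q}}} \frac{1}{p} \ge \frac{1}{\varphi(q)}\log\log x - O(1),
\]
with an absolute implied constant, which requires a uniform-in-$q$ version of Mertens's theorem in arithmetic progressions (such estimates are available in the literature). This is exactly the regime where the claim has content: once $\varphi(q)$ is large relative to $\log\log x$, the target $x/(\log x)^{1/\varphi(q)}$ is within an absolute constant factor of $x$ and the bound is trivial, while the edge case $q = 1$ produces an empty count. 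The absolute constant from Lemma \ref{smooth lemma ingredient}, together with the $O(1)$ slack in the Mertens estimate, then yields the stated uniform bound.
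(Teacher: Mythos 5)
Your proposal is correct and is essentially the same argument as the one the paper relies on: the paper itself gives no proof of this lemma, deferring to \cite{pol14}, and the proof there (going back to Pomerance's work on amicable numbers) is exactly yours --- majorize the count by the multiplicative indicator of the property ``no prime $p \equiv -1 \pmod{q}$ divides $n$ to exactly the first power,'' apply the Halberstam--Richert bound (Lemma \ref{smooth lemma ingredient}), and finish with a Mertens-type estimate for the progression $-1 \bmod q$. You also correctly isolate and dispose of the only delicate point, uniformity in $q$: the bound is trivial once $\varphi(q) \gg \log\log x$, and for the remaining (necessarily small) moduli a uniform version of Mertens's theorem in progressions is available in the literature, which is precisely how the cited proofs handle it.
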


\begin{lemma}\label{paul's 2.2}
For each prime $p$, the number of $n \leq x$ for which $p \mid \sigma(n)$ is
\[
\ll \frac{x \log_2 x}{p^{1/2}}.
\]
\end{lemma}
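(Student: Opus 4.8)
The plan is to exploit the multiplicativity of $\sigma$. Since $p$ is prime, if $p \mid \sigma(n)$ then $p \mid \sigma(\ell^a)$ for some prime power $\ell^a \| n$. The crucial observation is that this prime power cannot be small: because $\sigma(\ell^a)$ is a positive multiple of $p$ we have $\sigma(\ell^a) \geq p$, while $\sigma(\ell^a) = 1 + \ell + \cdots + \ell^a < 2\ell^a$, so that $\ell^a > p/2$. Bounding $\#\{n \le x : \ell^a \| n\} \le x/\ell^a$ and applying a union bound therefore gives
\[
\#\{n \leq x : p \mid \sigma(n)\} \leq \sum_{\substack{\ell^a : \, p \,\mid\, \sigma(\ell^a) \\ \ell^a > p/2}} \frac{x}{\ell^a},
\]
and I would split this sum according to whether $a = 1$ or $a \geq 2$.

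For the terms with $a \geq 2$, I would discard the divisibility condition entirely and simply bound their contribution by $x$ times $\sum 1/m$, the sum being taken over prime powers $m = \ell^a$ with $a \ge 2$ and $m > p/2$. Since the number of such prime powers up to $T$ is $\ll \sqrt{T}$ (dominated by squares of primes), partial summation yields $\sum_{m > p/2} 1/m \ll 1/\sqrt{p}$, so these terms contribute $\ll x/\sqrt{p}$. This is precisely where the factor $p^{-1/2}$ originates: it reflects the sparsity of higher prime powers beyond $p/2$, and the lower cutoff $m > p/2$ is exactly what makes this convergent tail small.

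For the terms with $a = 1$, the condition $p \mid \sigma(\ell) = \ell + 1$ is exactly $\ell \equiv -1 \mod{p}$, and the contribution is $x \sum_{\ell \leq x,\ \ell \equiv -1 \mod{p}} 1/\ell$. Here I would estimate the sum using the Brun--Titchmarsh inequality \cite[Theorem 3.8, p.~110]{hr74} together with partial summation, obtaining a bound $\ll (\log_2 x)/\varphi(p) \asymp (\log_2 x)/p$; the contribution of these terms is thus $\ll (x \log_2 x)/p$. Combining the two cases gives a total of $\ll x/\sqrt{p} + (x \log_2 x)/p \ll (x \log_2 x)/\sqrt{p}$, as claimed. (Note that the final bound is not tight: the genuine order is $x/\sqrt p + (x\log_2 x)/p$, with the stated $\log_2 x$ attached to the smaller term.)

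The main point to watch is uniformity in $p$. In particular, one cannot invoke Theorem \ref{additive mertens in progressions} for the $a = 1$ sum, since its implied constant depends on the modulus; the uniform Brun--Titchmarsh bound is what keeps the estimate valid across all $p$. A secondary subtlety is the care needed near the lower end of the progression sum, where Brun--Titchmarsh should be applied on the range $\ell \geq 2p$ while the single possible prime in $[p, 2p]$ (namely $2p-1$) is handled separately; this contributes only $O(1/p)$ and does not affect the final bound. I expect this uniformity bookkeeping, rather than any conceptual difficulty, to be the crux of a careful write-up.
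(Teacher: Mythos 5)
Your proof is correct and is essentially the same argument as the one the paper relies on: the paper defers this statement to Lemma 2.2 of \cite{pol14}, whose proof uses exactly your decomposition --- $p \mid \sigma(\ell^a)$ for some $\ell^a \| n$, forcing $\ell^a \gg p$, with the $a \geq 2$ case handled via the $O(\sqrt{T})$ count of higher prime powers (giving the $p^{-1/2}$) and the $a = 1$ case via Brun--Titchmarsh plus partial summation (giving the $(\log_2 x)/p$). Your observation that uniformity in $p$ is the reason Brun--Titchmarsh must be used instead of Mertens-type estimates in progressions is also the correct and relevant point.
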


\begin{lemma}\label{paul's 2.7}
Let $q$ be a natural number with $q \leq x^{\frac{1}{2\log_3 x}}$. The number of $n \leq x$ not belonging to $\E(x)$ for which $q \mid s(n)$ is
\[
\ll \frac{\tau(q)}{\varphi(q)} \cdot x\log_3 x.
\]
\end{lemma}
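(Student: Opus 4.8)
The plan is to exploit the factorization available for $n \notin \E(x)$: writing $n = mP$ with $P = P(n) > x^{1/\log_2 x}$, $P \nmid m$ (so that $m$ is automatically $P$-smooth), we have $s(n) = Ps(m) + \sigma(m)$. The condition $q \mid s(n)$ then becomes a single congruence on the prime $P$ once $m$ is fixed, and this is exactly the sort of condition that Brun--Titchmarsh handles. So I would fix $m$, count the admissible primes $P$, and sum the result over $m \le X := x^{1 - 1/\log_2 x}$ (the largest $m$ can be, since $mP \le x$ and $P > x^{1/\log_2 x}$).

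First I would analyze the congruence $Ps(m) \equiv -\sigma(m) \pmod q$. Writing $g := \gcd(q, s(m))$ and using $\sigma(m) = s(m) + m$, solvability in $P$ forces $g \mid m$, and when this holds $P$ is confined to a single residue class modulo $q/g$. Brun--Titchmarsh then bounds the number of such primes $P \le x/m$ by $\ll \frac{x/m}{\varphi(q/g)\,\log\!\big((x/m)/(q/g)\big)}$, provided $x/m$ comfortably exceeds $q/g$.

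The main term comes from $m \le x^{1 - 1/\log_3 x}$. There $x/m \ge x^{1/\log_3 x}$, and since $q \le x^{1/(2\log_3 x)}$ we get $(x/m)/(q/g) \ge x^{1/(2\log_3 x)}$, whence the logarithm in the denominator is $\gg \log x / \log_3 x$; this is precisely where the hypothesis on $q$ and the factor $\log_3 x$ enter. Using $1/\varphi(q/g) \le g/\varphi(q)$ (valid because $n/\varphi(n)$ is nondecreasing along divisibility), then $\sum_{m \le X,\, g \mid m} 1/m \ll (\log x)/g$, and finally summing over the divisors $g \mid q$ — legitimate as an upper bound since each $m$ is accounted for in the term $g = \gcd(q,s(m))$, which divides $m$ — the factors of $g$ cancel and one is left with $\tau(q)$ copies of $\frac{x \log_3 x}{\varphi(q)}$, giving the desired $\ll \frac{\tau(q)}{\varphi(q)} x \log_3 x$.

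The hard part will be the range $m > x^{1 - 1/\log_3 x}$, equivalently $P(n) < x^{1/\log_3 x}$, where $x/m$ is no longer large compared to $q/g$ and Brun--Titchmarsh loses its force. A crude count of the $\le 1$ admissible $P$ per $m$ is hopelessly lossy, since roughly $x^{1-1/\log_2 x}$ values of $m$ occur. The saving must come from two features at once: first, because $P$ is the \emph{largest} prime factor of $n$, every such $m$ is forced to be $x^{1/\log_3 x}$-smooth, so de Bruijn's estimate (Proposition \ref{smooth ub}, with $u \asymp \log_3 x$) confines these $n$ to a set of size $\ll x/(\log_3 x)^{(1+o(1))\log_3 x}$; second, the congruence $q \mid s(n)$ must still be invoked, because the smoothness bound alone exceeds the target when $q$ is near $x^{1/(2\log_3 x)}$. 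I expect the reconciliation of these two constraints — the smoothness of $m$ together with the congruence pinning $P$ into a thin residue class — to be the main obstacle, and I would attack it by reversing the order of summation, summing over the small prime $P$ first and bounding the number of smooth $m \le x/P$ subject to $q \mid (P+1)s(m) + m$.
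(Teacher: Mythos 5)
Your handling of what you call the main range is correct, and it is the expected argument: the reduction to $Ps(m)\equiv-\sigma(m)\pmod q$, the observation that $g:=\gcd(q,s(m))$ must divide $\sigma(m)-s(m)=m$, Brun--Titchmarsh modulo $q/g$, the inequality $\varphi(q)\le g\,\varphi(q/g)$, the bound $\sum_{g\mid m}1/m\ll(\log x)/g$, and the final sum over $g\mid q$ all check out, and for $m\le x^{1-1/\log_3 x}$ they do produce $\ll\frac{\tau(q)}{\varphi(q)}x\log_3 x$. Note that the paper itself contains no proof to compare against: this lemma is quoted verbatim from \cite{pol14}, and the reader is referred there; so the only question is whether your attempt is a complete self-contained proof.

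It is not, and the gap is exactly the range you defer, $m>x^{1-1/\log_3 x}$ (equivalently $P(n)<x^{1/\log_3 x}$); this is not a routine loose end that "reversing the order of summation" can be expected to close. Concretely, take $q$ a prime near $x^{1/(2\log_3 x)}$, so the target bound is roughly $x^{1-1/(2\log_3 x)}\log_3 x$. In this range Brun--Titchmarsh is vacuous whenever the modulus $q/g$ exceeds the interval length $x/m$ in which $P$ lives: all one can say is that $P$, if it exists, is the unique admissible residue, i.e.\ at most one prime per $m$. The candidate $m$ are $x^{1/\log_3 x}$-smooth multiples of $g$ (and $g$ may well be $1$), so counting one prime per $m$ gives roughly $x/(\log_3 x)^{(1+o(1))\log_3 x}$, which exceeds the target by a factor of about $\exp\big(\tfrac{\log x}{2\log_3 x}-(1+o(1))\log_3 x\log_4 x\big)\to\infty$; neither the smoothness of $m$ nor the divisibility $g\mid m$ repairs this. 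Thus the entire weight of the lemma in this regime rests on your last sentence: proving that, for fixed $P$, the congruence $q\mid(P+1)s(m)+m$ thins out the smooth $m\le x/P$ by a factor of order $1/\varphi(q)$. But that is a joint condition on $(m,\sigma(m))$ modulo $q$ with no prime variable left to sieve on -- a statement of essentially the same depth as the lemma being proved -- and none of the tools available here (de Bruijn's bound, Brun--Titchmarsh, the Halberstam--Richert inequality) delivers it. What your argument as written does prove is the weaker statement with $\log_2$ in place of $\log_3$: if $q\le x^{1/(2\log_2 x)}$, then $(x/m)/(q/g)\ge x^{1/(2\log_2 x)}$ for every $m$ arising from $n\notin\E(x)$, so there is no exceptional range at all and one gets $\ll\frac{\tau(q)}{\varphi(q)}x\log_2 x$. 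That version would in fact suffice for the one application of this lemma in the paper (where $q=p\le\log x$), but it is not the statement of the lemma.
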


\section{A crude, but useful, upper bound}

For $n \leq x$ with $n \not\in \E(x)$, we have $n = mP$ where $P > x^{1/\log_2x}$ and $P \nmid m$. The purpose of the present section is to establish the following proposition, which allows us to impose a number of additional conditions on $m$ without sacrificing too many values of $n$. Our proof borrows many ideas from an argument of Pollack \cite[Section 5.2]{pol14}. 

\begin{prop}\label{m conditions}
For each $n \leq x$, write $n = mP$, where $P := P(n)$. Consider those $n \leq x$ such that $s(n)$ can be written as a sum of two squares. For all but $o(x/\sqrt{\log x})$ such values of $n$, all of the following conditions hold:
\begin{itemize}

\item[(i)] $\displaystyle \prod_{p \leq \sqrt{\log_2 x}} p$ divides $\sigma(m)$

\item[(ii)] $\displaystyle \sum_{\substack{p \mid \sigma(m) \\ p > (\log_2 x)^{10}}} \frac{1}{p} \leq 1$

\item[(iii)] $\displaystyle \sum_{\substack{p \mid s(m) \\ p > (\log_2 x)^{10}}} \frac{1}{p} \leq 1$.

\end{itemize}

\end{prop}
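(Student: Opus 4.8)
The plan is to first remove the error set and then isolate a single estimate that drives all three parts. By Lemma~\ref{error set size} the integers $n\in\E(x)$ number $\ll x/(\log x)^2=o(x/\sqrt{\log x})$, so I may assume $n=mP\notin\E(x)$ throughout; here $m\le y:=x^{1-1/\log_2 x}$ and $P$ runs over primes in $(x^{1/\log_2 x},\,x/m]$, so that $\log(x/m)\gg\log x/\log_2 x$. The engine I would establish is an upper-bound sieve (Brun or Selberg) applied to the linear form $s(mP)=Ps(m)+\sigma(m)$ in the prime variable $P$: since $s(mP)$ being a sum of two squares forces $\ell\nmid s(mP)$ for every prime $\ell\equiv 3\pmod{4}$, apart from the sparse event $\ell^2\mid s(mP)$, I sieve out the single residue class of $P$ modulo each such $\ell\le z$ with $\ell\nmid s(m)$. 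Evaluating the resulting product over $\ell\equiv 3\pmod 4$ by Corollary~\ref{mult mertens in progressions} gives, uniformly for $m\le y$,
\[
\#\{P:\ mP\le x,\ s(mP)\text{ is a sum of two squares}\}\ \ll\ \frac{x/m}{\log(x/m)}\,\frac{(\log_2 x)^{O(1)}}{\sqrt{\log x}}\,\frac{r(m)}{\varphi(r(m))},
\]
where $r(m)$ is the largest divisor of $s(m)$ supported on primes $\equiv 3\pmod 4$, and the factor $r(m)/\varphi(r(m))$ compensates for exactly those primes that cannot be sieved because they already divide $s(m)$. Summing over $m$ reproduces the crude upper bound of this section; more usefully, restricting the $m$-sum to those $m$ that violate a given condition will bound the corresponding exceptional set.

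\textbf{Condition (i).} Condition (i) fails precisely when $p\nmid\sigma(m)$ for some prime $p\le\sqrt{\log_2 x}$, so I take a union bound over these $p$. For each $p$ I insert the engine and sum over $m\le y$ with $p\nmid\sigma(m)$. Lemma~\ref{paul's 2.1} gives $\#\{m\le t:p\nmid\sigma(m)\}\ll t/(\log t)^{1/(p-1)}$ uniformly, so partial summation yields $\sum_{m\le y,\ p\nmid\sigma(m)}1/m\ll(\log x)^{1-1/(p-1)}$. After absorbing the factor $r(m)/\varphi(r(m))$ (whose average over $m$ is $O(1)$), the number of offending $n$ is $\ll x(\log_2 x)^{O(1)}(\log x)^{-1/(p-1)}/\sqrt{\log x}$. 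Summing over $p\le\sqrt{\log_2 x}$, the dominant contribution comes from the largest such prime, $p\asymp\sqrt{\log_2 x}$, and totals $\ll x(\log_2 x)^{O(1)}\exp(-\sqrt{\log_2 x})/\sqrt{\log x}=o(x/\sqrt{\log x})$. The decisive point is that the sieve's saving of $1/\sqrt{\log x}$ is what converts the otherwise too-large factor $\exp(-\sqrt{\log_2 x})$ into a genuine gain against the target $x/\sqrt{\log x}$; without it this approach fails.

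\textbf{Conditions (ii) and (iii).} These I handle by a first-moment computation followed by Markov's inequality, with $T:=(\log_2 x)^{10}$. Writing the relevant moment as $\sum_{p>T}\frac1p\,\#\{n=mP\le x:\ p\mid\sigma(m),\ s(n)\text{ a sum of two squares}\}$ (respectively with $s(m)$ in place of $\sigma(m)$), I apply the engine for each fixed $p>T$. For (ii), Lemma~\ref{paul's 2.2} gives $\#\{m\le t:p\mid\sigma(m)\}\ll t\log_2 t/\sqrt p$, so each $p$ contributes $\ll x(\log_2 x)^{O(1)}/(\sqrt{\log x}\,\sqrt p)$, and since $\sum_{p>T}p^{-3/2}\ll(\log_2 x)^{-5}/\log_3 x$ the moment is $\ll x/(\sqrt{\log x}\,(\log_2 x)^{3}\log_3 x)$. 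For (iii), I apply Lemma~\ref{paul's 2.7} to the integer $m$ itself with $q=p$, giving $\#\{m\le t:p\mid s(m)\}\ll t\log_3 t/p$ (treating the sparse set $m\in\E(t)$ separately); then each $p$ contributes $\ll x(\log_2 x)^{O(1)}\log_3 x/(\sqrt{\log x}\,p)$, and $\sum_{p>T}p^{-2}\ll(\log_2 x)^{-10}/\log_3 x$ makes the moment $\ll x/(\sqrt{\log x}\,(\log_2 x)^{9})$. Both moments are $o(x/\sqrt{\log x})$, so Markov's inequality shows the sets where the sums in (ii) or (iii) exceed $1$ are each $o(x/\sqrt{\log x})$.

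\textbf{Main obstacle.} The real work is the engine. I expect the hardest part to be making the upper-bound sieve on $Ps(m)+\sigma(m)$ fully uniform in $m$ (choosing the level $z$ so that $\log z\asymp\log(x/m)$, and disposing of the rare events $\ell^2\mid s(mP)$), and, above all, correctly bookkeeping the primes $\ell\equiv 3\pmod 4$ dividing $s(m)$ through the factor $r(m)/\varphi(r(m))$. To feed the engine into each of the three arguments I must know that $\sum_{m\in\A}\frac{r(m)/\varphi(r(m))}{m}\ll\sum_{m\in\A}\frac1m$ for the sets $\A$ cut out by the conditions; this follows by writing $r(m)/\varphi(r(m))=\sum_{d\mid r(m)}\mu^2(d)/\varphi(d)$ and resumming a convergent series, but one must check that this does not interact badly with the defining congruence condition on $\A$. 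A lesser nuisance is controlling, in part (iii), those $m$ lying in the error set $\E(t)$ excluded by Lemma~\ref{paul's 2.7}; their contribution is $o(x/\sqrt{\log x})$ by the same size estimate as in Lemma~\ref{error set size}, once weighted by the sieve saving.
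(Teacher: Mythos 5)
Your proposal follows essentially the same route as the paper: an upper-bound sieve applied to the linear form $Ps(m)+\sigma(m)$ in the prime variable $P$, yielding a per-$m$ bound of shape $x(\log_2 x)^{O(1)}/(m(\log x)^{3/2})$ (the paper's \eqref{crude bound}), followed by union/first-moment counts of the exceptional $m$ via Lemmas \ref{paul's 2.1}, \ref{paul's 2.2} and \ref{paul's 2.7}. Your treatments of (i) and (ii) are correct and match the paper's up to bookkeeping (you sum over all $m$ by partial summation where the paper first imposes $m\ge \exp(\log x/(\log_2 x)^4)$ and works in intervals $(x^\lambda, ex^\lambda]$; both devices do the same job). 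However, there is a genuine gap in your treatment of (iii): you invoke Lemma \ref{paul's 2.7} with $q=p$ for \emph{every} prime $p>(\log_2 x)^{10}$, but that lemma carries the hypothesis $q\le x^{1/(2\log_3 x)}$ (with $x$ replaced by the range $t$ of $m$), and $s(m)$ can have prime factors far beyond $t^{1/(2\log_3 t)}$, indeed of size comparable to $\sigma(m)$. So in the tail of the $p$-range your moment $\sum_{p>T}\frac1p\,\#\{m\le t: p\mid s(m)\}$ is not covered by the cited lemma, and you supply no substitute bound there. The paper's fix is the observation that huge primes cannot by themselves make (iii) fail: since $s(m)<x$, it has at most $\log x/\log_2 x$ prime factors exceeding $\log x$, so $\sum_{p\mid s(m),\, p>\log x}1/p\le 1/\log_2 x$; hence failure of (iii) forces $\sum_{p\mid s(m),\,(\log_2 x)^{10}<p\le \log x}1/p\ge 1/2$, and the moment need only run over $p\le\log x$, comfortably inside the lemma's range. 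Your argument needs this truncation (or an equivalent one) to go through.

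A second, smaller point: the difficulty you single out as the main obstacle --- proving $\sum_{m\in\A}\frac{r(m)/\varphi(r(m))}{m}\ll\sum_{m\in\A}\frac1m$ for each exceptional set $\A$ and worrying about interaction with the conditions defining $\A$ --- can be bypassed entirely, and this is exactly what the paper does. One only needs the pointwise maximal-order bound: the unsieved primes all divide $Qs(m)\sigma(m)<x^3$ (where $Q$ is the $3\bmod 4$ part of $s(n)$), so $\prod_{p\mid Qs(m)\sigma(m)}(1+1/p)\ll \log_2 x$ uniformly in $m$ and $P$. A pointwise loss of a fixed power of $\log_2 x$ is harmless, since the savings produced in (i), (ii), (iii) are respectively $\exp(-\tfrac12\sqrt{\log_2 x})$ and (per interval) $(\log_2 x)^{-4}$, each of which absorbs the $(\log_2 x)^{5/2}$-type loss with room to spare. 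Note that keeping the unsievable primes inside this product, rather than attempting to sieve them, is also how the paper disposes of what you call the ``sparse event'' $\ell^2\mid s(mP)$: primes dividing $Q$ are simply never sieved, so no separate estimate for that event is required in this proposition.
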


\begin{proof}

First, let us assume that $n \notin \E(x)$; by Lemma \ref{error set size}, this discards $O(x/(\log x)^2)$ values of $n$. Suppose $s(n)$ can be written as the sum of two squares. If $Q$ is the largest divisor of $s(n)$ supported on the primes $p \equiv 3 \mod 4$, then $Q$ must be a square. Write $n = mP$, with $P = P(n)$ and $P \nmid m$; then $s(n) = Ps(m) + \sigma(m)$.

We begin by bounding the number of $n$ corresponding to a given $m$. By an application of Selberg's upper bound method (specifically \cite[Theorem 4.2]{hr74}), the number of possibilities for the prime $P \leq x/m$ is
\begin{align}\label{selberg ub sieve}
\ll \frac{x/m}{\log \tfrac{x}{m}} \prod_{\substack{p \leq x/m \\ p \equiv 3 \mod 4 \\ p \nmid Q}} \bigg(1 - \frac{\rho(p)}{p} \bigg) \prod_{\substack{p \leq x/m \\ p \equiv 3 \mod 4 \\ p \nmid Q \\ p \mid \sigma(m) }} \bigg(1 - \frac{1}{p} \bigg)^{-1},
\end{align}
where
\[
\rho(p) = \#\{n \mod p : ns(m) + \sigma(m) \equiv 0 \mod p\}.
\]

We observe that $(m, \sigma(m))$ is not divisible by any primes $p \equiv 3 \mod 4$, $p \nmid Q$. Indeed, $s(n)/Q$ is free of such prime factors. But
\begin{align}\label{free of primes}
(m, \sigma(m)) \mid (P+1)\sigma(m) - mP = s(n),
\end{align}
so that $s(n)$ cannot be free of prime factors $p \equiv 3 \mod 4$, $p \nmid Q$ unless the same is true for $(m, \sigma(m))$. Now, for a prime $p \equiv 3 \mod 4$ and $p \nmid Q$, if $p \mid s(m)$ and $p \mid \sigma(m)$, then $p \mid m$, which contradicts $(m, \sigma(m))$ being free of such prime factors. Thus, for such primes $p$, $\rho(p) = 0$ if $p \mid s(m)$ and $\rho(p) = 1$ otherwise. Therefore, the expression \eqref{selberg ub sieve} is
\begin{align}\label{selberg ub sieve 2}
\ll \frac{x/m}{\log \tfrac{x}{m}}\prod_{\substack{p \leq x/m \\ p \equiv 3 \mod 4 \\ p \nmid Q}} \bigg(1 - \frac{1}{p} \bigg) &\prod_{\substack{p \mid s(m)\sigma(m) \\ p \equiv 3 \mod 4 \\ p \nmid Q}} \bigg(1 + \frac{1}{p}\bigg) \nonumber \\  &\ll \frac{x/m}{\log \tfrac{x}{m}}\prod_{\substack{p \leq x/m \\ p \equiv 3 \mod 4}} \bigg(1 - \frac{1}{p} \bigg) \prod_{\substack{p \mid Qs(m)\sigma(m) \\ p \equiv 3 \mod 4}} \bigg(1 + \frac{1}{p}\bigg),
\end{align}
where we have removed the condition $p \nmid Q$ from the first product at the cost of allowing $p \mid Q$ in the second product. By Corollary \ref{mult mertens in progressions}, the first product is $\ll 1/\sqrt{\log x/m}$, and so \eqref{selberg ub sieve 2} becomes

\begin{align}\label{selberg ub sieve 3}
\ll \frac{x/m}{(\log \tfrac{x}{m})^{3/2}} \prod_{\substack{p \mid Qs(m)\sigma(m) \\ p \equiv 3 \mod 4}} \bigg(1 + \frac{1}{p}\bigg).
\end{align}

Note that $\prod_{p \mid Qs(m)\sigma(m)} (1 + 1/p) \leq \frac{\sigma(Qs(m)\sigma(m))}{Qs(m)\sigma(m)}$. Since $Qs(m)\sigma(m) \leq s(n)^3 < x^3$, we have $\frac{\sigma(Qs(m)\sigma(m))}{Qs(m)\sigma(m)} \ll \log_2(x)$ by maximal order results for the $\sigma$-function. Also, since $P > x^{1/\log_2 x}$, we have that $m < x/\exp(\log x / \log_2 x)$. Therefore $\log(x/m) > \log x / \log_2 x$. Using these facts together with Corollary \ref{mult mertens in progressions}, the number of possibilities for the prime $P \leq x/m$ satisfies
\begin{align}\label{crude bound}
\ll \frac{x(\log_2 x)^{5/2}}{m(\log x)^{3/2}}.
\end{align}

Using this upper bound, Proposition \ref{m conditions} follows from the same arguments appearing in the proof of Theorem 1.11 in \cite{pol14}; for completeness, we repeat these arguments here. First, we may assume
\begin{align}\label{m isnt small}
m \geq \exp(\log x/ (\log_2 x)^4),
\end{align}
since summing \eqref{crude bound} over those $m$ less than this bound gives a count of corresponding $n$ that is $o(x/\sqrt{\log x})$. Let $L = \lceil \log x \rceil$, and choose $\lambda \in \cL = \{1/L, \ldots, (L - 1)/L\}$ as large as possible so that $m > x^{\lambda}$; a simple calculation shows that $m \in (x^\lambda, ex^\lambda]$. By Lemma \ref{paul's 2.1}, the number of $m \in (x^\lambda, ex^\lambda]$ for which (i) fails is
\begin{align*}
&\ll \sum_{d \leq \sqrt{\log_2 x}} \frac{x^\lambda}{(\log x^\lambda)^{1/\varphi(d)}} \ll x^\lambda \sum_{d \leq \sqrt{\log_2 x}} \exp(-\log_2(x^\lambda) / \sqrt{\log_2 x} ) \\
&\ll x^\lambda / \exp(\tfrac12 \sqrt{\log_2 x} ) \ll x^\lambda/(\log_2 x)^4.
\end{align*}
Here, we use that $m \asymp x^\lambda$ and that $m$ satisfies \eqref{m isnt small}. From Lemma \ref{paul's 2.2}, the count of $m \in (x^\lambda, ex^\lambda]$ for which (ii) fails is
\begin{align*}
\ll \sum_{m \leq ex^\lambda} \sum_{\substack{p \mid \sigma(m) \\ p > (\log_2 x)^{10}}} \frac{1}{p} &\leq \sum_{p > (\log_2 x)^{10}} \sum_{\substack{m \leq ex^\lambda \\ p \mid \sigma(m)}} 1 \\
&\ll x^{\lambda}\log_2 x \sum_{p > (\log_2 x)^{10}} \frac{1}{p^{3/2}} \ll \frac{x^\lambda}{(\log_2 x)^4}.
\end{align*}

For (iii), since $s(m) < \sigma(m) < x$, 
\[
\sum_{\substack{p \mid s(m) \\ p > \log x}} \frac{1}{p} \leq \frac{1}{\log x} \sum_{\substack{p \mid s(m) \\ p > \log x}} 1 \leq \frac{1}{\log x} \cdot \frac{\log s(m)}{\log_2 x} \leq \frac{1}{\log_2 x}.
\]
Hence, if $m$ fails (iii), then for large enough $x$
\begin{align}\label{failing (iii)}
\sum_{\substack{p \mid s(m) \\ (\log_2 x)^{10} < p \leq \log x}} \frac{1}{p} \geq \frac{1}{2}.
\end{align}
The size of the set $\E(x^\lambda)$ is $O(x^\lambda/(\log_2 x)^4)$, while the number of $m \in (x^\lambda, ex^\lambda]$ not belonging to $\E(x^\lambda)$ and satisfying \eqref{failing (iii)} is at most
\begin{align*} 
2 \sum_{\substack{m \leq ex^\lambda \\ m \notin \E(x^\lambda)}} \sum_{\substack{p \mid s(m) \\ (\log_2 x)^{10} < p \leq \log x}} \frac{1}{p} &= 2 \sum_{(\log_2 x)^{10} < p \leq \log x} \frac{1}{p} \sum_{\substack{m \leq ex^\lambda \\ m \notin \E(x^\lambda) \\ p \mid s(m)}} 1 \\
&\ll x^{\lambda}\log_3 x \sum_{p > (\log_2 x)^{10}} \frac{1}{p^2} \ll \frac{x^\lambda}{(\log_2 x)^9}.
\end{align*}
Here we applied Lemma \ref{paul's 2.7} with $x$ replaced by $ex^\lambda$ and $q$ replaced by $p$. Collecting our estimates, we see that the number of $m \in (x^{\lambda}, ex^{\lambda}]$ failing one of (i), (ii), or (iii) is
\begin{align}\label{m in lambda failing}
O\bigg( \frac{x^{\lambda}}{(\log_2 x)^4} \bigg).
\end{align}
Summing the bound \eqref{crude bound} over these $m$, we see that the number of corresponding $n$ is
\[
\frac{x(\log_2 x)^{5/2}}{(\log x)^{3/2}} \cdot \frac{1}{x^{\lambda}} \cdot \frac{x^{\lambda}}{(\log_2 x)^4} \ll \frac{x}{(\log x)^{3/2} (\log_2 x)^{5/2}}.
\]
Summing over the $O(\log x)$ values of $\lambda$ completes the proof.
\end{proof}

\section{Proof of upper bound in Theorem \ref{main thm sum of two squares}}

In this section, we establish an upper bound for the number of $n \leq x$, $n = mP$, where $P > x^{1/\log_2 x}$ and $m$ satisfies conditions (i) -- (iii) from Proposition \ref{m conditions}, such that $s(n)$ is the sum of two squares. Let $\lambda \in \{0, 1/L, \ldots, (L - 1)/L\}$, where $L = \lceil \log x \rceil$. Then $\lambda < \log m / \log x \leq \lambda + 1/L$, so that $x^\lambda < m \leq ex^\lambda$.

Let $R$ be the largest divisor of $s(n)$ supported on the primes $p \equiv 3 \mod 4$ not exceeding $(x/m)^{1/10}$. Note that this means $R$ is a square. By Lemma \ref{square factor size}, we can assume that $R \leq (\log x)^2$, as this discards a negligible number of $n$. We will count those $n \leq x$ corresponding to a given $R$ and, at the end, sum on square numbers $R$ with the aforementioned property.

If $R \mid s(n)$ and $s(n) = Ps(m) + \sigma(m)$, then $\gcd(R, s(m)) \mid \sigma(m)$. Fix a decomposition $R = R_1 R_2$, where $R_1 = \gcd(R, s(m))$ and $R_2 = R/R_1$; then
\[
R_2 \mid P \frac{s(m)}{R_1} + \frac{\sigma(m)}{R_1}.
\]
Therefore
\[
P \frac{s(m)}{R_1} \equiv -\frac{\sigma(m)}{R_1} \mod{R_2}.
\]
Since $\frac{s(m)}{R_1}$ and $R_2$ are coprime, $P$ belongs to a uniquely determined residue class $a \mod{R_2}$, where $0 \leq a < R_2$ and $a$ depends on $m$ and $R$. So we count the number of $P \equiv a \mod{R_2}$ for which $Ps(m) + \sigma(m)$ is free of prime factors $q \equiv 3 \mod 4$, $q \leq (x/m)^{1/10}$, $q$ not dividing $R$.

Note that $P = a + R_2u$, where $u \leq \frac{x}{mR_2}$. We now claim that for each prime $q \leq (x/m)^{1/10}$ with $q \nmid R$ and $q \nmid s(m)\sigma(m)$, the number $u$ avoids two distinct residue classes modulo $q$. Indeed, if $P > (x/m)^{1/10}$, then $P = a + R_2 u$ is not divisible by any primes $\leq (x/m)^{1/10}$. Thus, for each prime $q \leq (x/m)^{1/10}$ with $q \nmid R$, we have that $u$ avoids the residue class $-R_2^{-1}a \mod q$. In addition, for $n$ to correspond to $R$, we need that if $q \leq (x/m)^{1/10}$, with $q \nmid R$ and $q \equiv 3 \mod 4$,
\[
(a + R_2 u)s(m) + \sigma(m)
\]
is not divisible by $q$. This means that $u$ avoids the residue class $R_2^{-1}(\sigma(m)s(m)^{-1} - a) \mod q$ -- which is distinct from the residue class $-R_2^{-1}a \mod q$ that $u$ already avoids -- provided $q \nmid s(m)\sigma(m)$.

Since $R_2 \leq R \leq (\log x)^2$, the number of such $P$ is, by Brun's sieve,
\[
\ll \frac{x}{m R_2 \log(x/m)^{3/2}} \bigg( \frac{R}{\varphi(R)} \bigg)^2 \prod_{\substack{q \mid s(m) \sigma(m) \\ q \equiv 3 \mod 4 \\ q \nmid R}} \bigg(1 + \frac{1}{q} \bigg).
\]
Now, we work with the product. We can write this product as
\[
\prod_{\substack{q \mid s(m)\sigma(m) \\ q \equiv 3 \mod 4 \\ q \nmid R}} \bigg(1 + \frac{1}{q}\bigg) = P_1 P_2 P_3,
\]
where
\[
P_1 = \prod_{\substack{q \mid s(m)\sigma(m) \\ q \equiv 3 \mod 4 \\ q \leq \sqrt{\log_2 x} \\ q \nmid R}} \bigg(1 + \frac{1}{q}\bigg), \qquad P_2 = \prod_{\substack{q \mid s(m)\sigma(m) \\ q \equiv 3 \mod 4 \\ q > (\log_2 x)^{10}  \\ q \nmid R}} \bigg(1 + \frac{1}{q}\bigg), \qquad P_3 = \prod_{\substack{q \mid s(m)\sigma(m) \\ q \equiv 3 \mod 4 \\ \sqrt{\log_2 x} < q \leq (\log_2 x)^{10} \\ q \nmid R}} \bigg(1 + \frac{1}{q}\bigg).
\]

By taking logarithms and using Mertens' theorem, conditions (ii) and (iii) give $P_2 \ll 1$, and the same technique shows $P_3 \ll 1$ as well. It remains to estimate $P_1$. We have
\[
P_1 = \prod_{\substack{q \mid s(m)\sigma(m) \\ q \equiv 3 \mod 4 \\ q \leq \sqrt{\log_2 x} \\ q \nmid R}} \bigg(1 + \frac{1}{q}\bigg) \leq \prod_{\substack{q \mid s(m) \\ q \equiv 3 \mod 4 \\ q \leq \sqrt{\log_2 x} \\ q \nmid R}} \bigg(1 + \frac{1}{q}\bigg) \prod_{\substack{q \mid \sigma(m) \\ q \equiv 3 \mod 4 \\ q \leq \sqrt{\log_2 x} \\ q \nmid R}} \bigg(1 + \frac{1}{q}\bigg).
\]
By condition (i) above, the second product on the right-hand side runs over all primes $q \leq \sqrt{\log_2 x}$. Upon taking logarithms and using Mertens's theorem for progressions, one sees that this product is $\ll \sqrt{\log_3 x}$. As for the first product on the right-hand side, recall that $(m, \sigma(m)) = (s(m), \sigma(m))$ is free of prime factors $q \equiv 3 \mod 4$; hence, this first product is empty, by (i). Therefore
\[
\prod_{\substack{q \mid s(m) \sigma(m) \\ q \equiv 3 \mod 4 \\ q \nmid R}} \bigg(1 + \frac{1}{q} \bigg) \ll \sqrt{\log_3 x},
\]
and so the number of such $P$ is
\begin{align}\label{upper bound count of P}
\ll \frac{x^{1 - \lambda} \sqrt{\log_3 x}}{(1 - \lambda)^{3/2} (\log x)^{3/2}} \bigg( \frac{R}{\varphi(R)} \bigg)^2 \frac{1}{R_2}.
\end{align}

Now we count $m$ corresponding to $\lambda$. Since $R_1 \mid s(m)$ and $R_1 \mid \sigma(m)$, we have $R_1 \mid m$. Writing $m = R_1 m'$, it suffices to count $m'$. We have $m' \leq ex^{\lambda}/R_1$, $m'$ is $x^{1 - \lambda}$-smooth, and $m'$ has no prime factors $q \equiv 3 \mod 4$ with $q \leq \sqrt{\log_2 x}$, except possibly those such $q$ with $q \mid R$. By Lemma \ref{smooth lemma 3 mod 4}, the number of such $m'$ is
\begin{align}\label{upper bound count of m}
\ll \frac{x^\lambda}{R_1 \sqrt{\log_3 x}}(1 - \lambda)^2 \frac{R}{\varphi(R)}.
\end{align}

By multiplying \eqref{upper bound count of P} and \eqref{upper bound count of m}, we see that the overall count of $n$ is
\[
\ll \frac{x (1 - \lambda)^{1/2}}{R_1 R_2 (\log x)^{3/2}} \bigg( \frac{R}{\varphi(R)} \bigg)^3.
\]
Summing on decompositions $R = R_1 R_2$ and then on square numbers $R < (\log x)^2$ gives
\begin{align*}
\sum_{\substack{R < (\log x)^2 \\ R = \square}} \sum_{R_1R_2 = R} \frac{x (1 - \lambda)^{1/2}}{R_1 R_2 (\log x)^{3/2}} \bigg( \frac{R}{\varphi(R)} \bigg)^3 &= \sum_{\substack{R < (\log x)^2 \\ R = \square}} \frac{x (1 - \lambda)^{1/2}}{(\log x)^{3/2}} \bigg( \frac{R}{\varphi(R)} \bigg)^3 \frac{\tau(R)}{R} \\ &\ll \frac{x}{(\log x)^{3/2}}.
\end{align*}
Finally, summing on the $O(\log x)$ values of $\lambda$ yields the desired bound. \qed

\section{Proof of lower bound in Theorem \ref{main thm sum of two squares}}

The aim of this section is to establish a lower bound of the form
\[
\sum_{\substack{n \leq x \\ s(n) = \square + \square}} 1 \gg \frac{x}{\sqrt{\log x}}.
\]


We achieve this lower bound by imposing a number of conditions on $n \leq x$. We assume that $n \notin \E(x)$, so that $n = mP$ where $P := P(n) > x^{1/\log_2 x}$ and $P \nmid m$. We also require that $m$ satisfies the bound \eqref{m isnt small}, condition (i) of Proposition \ref{m conditions}, and that if a prime $p$ divides $(m, \sigma(m))$, then $p \equiv 1 \mod 4$. Further, we count only those $m$ that can be decomposed as $m = m_1m_2$, where $m_1$ and $m_2$ satisfy:

\begin{itemize}

\item $m_1$ squarefree 

\item $m_1$ is $\sqrt{\log_2 x}$-smooth

\item $p \mid m_1 \implies p \equiv 1 \mod 4$

\item $m_2 \equiv 3 \mod 4$

\item $p \mid m_2 \implies p >(\log_2 x)^{10}$.

\end{itemize}


With this setup, we bound from below the sum
\[
\sideset{}{'} \sum_{m \leq x^{1/500}} \sum_{\substack{x^{1/2} < P \leq x/m \\ p \mid Ps(m) + \sigma(m) \implies p \equiv 1 \mod 4}} 1,
\]
where the $'$ on the sum indicates that $m$ satisfies all conditions mentioned above. Note that $(m, \sigma(m)) = (s(m), \sigma(m))$ can be written as a sum of two squares, as $(m, \sigma(m))$ is free of prime factors $q \equiv 3 \mod 4$. Since the set of numbers which are a sum of two squares is closed under multiplication, it suffices to estimate
\[
\sideset{}{'} \sum_{m \leq x^{1/500}} \sum_{\substack{x^{1/2} < P \leq x/m \\ p \mid P\alpha(m) + \beta(m) \implies p \equiv 1 \mod 4}} 1
\]
from below, where $\alpha(m) = s(m)/(m, \sigma(m))$ and $\beta(m) = \sigma(m)/(m, \sigma(m))$.

We handle the inner sum by appealing to a theorem tailor-made for our problem, namely \cite[Theorem 14.8]{fi10}, stated below.

\begin{theorem}\label{operadecribro}
Let $\alpha \geq 1, \beta \neq 0, (\alpha, \beta) = 1$, and $\alpha + \beta \equiv 1 \mod 4$. Then, for $x \geq 2|\beta|\alpha^{-1}$, $x \geq \alpha^{68}$, we have
\[
\sum_{\substack{p \leq x \\ q \mid \alpha p + \beta \implies q \equiv 1 \mod 4 }} 1 \asymp \frac{x}{(\log x)^{3/2}} \prod_{\substack{p \mid \alpha\beta \\ p \equiv 3 \mod 4}} \bigg( 1 + \frac{1}{p} \bigg).
\]
\end{theorem}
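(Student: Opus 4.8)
The plan is to recognize Theorem \ref{operadecribro} as a sieve problem of dimension $\kappa = 1/2$ and to attack it with the semilinear (half-dimensional) sieve applied to the sequence $\mathcal{A} = \{\alpha p + \beta : p \leq x\}$, sifted by the set of primes $q \equiv 3 \pmod 4$. The governing heuristic is that, for $q \nmid \alpha\beta$, a prime $q \equiv 3 \pmod 4$ removes a single residue class for $p$ while a prime $q \equiv 1 \pmod 4$ imposes no condition; since the primes $\equiv 3 \pmod 4$ have relative density $1/2$, the local densities satisfy $\sum_{q < t,\, q \equiv 3\,(4)} (\log q)/q \sim \tfrac12 \log t$, the signature of dimension $1/2$. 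The main sieve density is $V(z) = \prod_{q < z,\, q \equiv 3\,(4)}(1 - 1/q) \asymp (\log z)^{-1/2}$ by Corollary \ref{mult mertens in progressions}, so that with a sifting level $z = x^\delta$ for a fixed $\delta \in (0, \tfrac12)$ one obtains the target order $\tfrac{x}{\log x}\,V(z) \asymp x/(\log x)^{3/2}$. The hypotheses $x \geq 2|\beta|\alpha^{-1}$ and $x \geq \alpha^{68}$ guarantee respectively that the form values are positive of size $\asymp \alpha x$ and that the modulus $\alpha$ is small enough to leave room for the sieve, while $\alpha + \beta \equiv 1 \pmod 4$ is the compatibility condition ensuring that $\alpha p + \beta$ can actually be a product of primes $\equiv 1 \pmod 4$ for a positive-density set of primes $p$.

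First I would dispose of the upper bound, which is the easier half. The set to be counted—those $p \leq x$ for which \emph{every} prime factor of $\alpha p + \beta$ is $\equiv 1 \pmod 4$—is contained in the sifted set of $p$ for which $\alpha p + \beta$ has no prime factor $q \equiv 3 \pmod 4$ below $z$, so the upper-bound half-dimensional sieve applies verbatim. The level of distribution it requires is furnished by the Bombieri--Vinogradov theorem: because $\mathcal{A}$ consists of values of a linear form at primes, the remainders are under control for moduli up to $x^{1/2 - \epsilon}$, permitting $\delta$ arbitrarily close to $1/2$. The modified local behaviour at the primes $q \mid \alpha\beta$ (where the form is constant mod $q$ if $q \mid \alpha$, and vanishes only for $p = q$ if $q \mid \beta$) produces exactly the finite Euler product over $p \mid \alpha\beta$, $p \equiv 3 \pmod 4$ appearing in the statement. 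This yields the asserted upper bound of the right order.

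The lower bound is the crux and the principal obstacle. Here the inclusion runs the wrong way: the sifted set also contains those $p$ for which $\alpha p + \beta$ carries a prime factor $q \equiv 3 \pmod 4$ with $q \geq z$, so a lower bound for the sifted set does not bound our count from below. Worse, removing such $p$ naively by Brun--Titchmarsh is ruinously lossy—summing over the admissible large $q$ costs $\gg x/\log x$, which swamps the main term $x/(\log x)^{3/2}$. This is the familiar difficulty that a sieve cannot cheaply forbid a single large prime factor from an excluded class. The resolution is the full semilinear sieve of Friedlander and Iwaniec: the dimension-$1/2$ sieve functions $f_{1/2}, F_{1/2}$ are positive throughout the relevant range, and—decisively—this problem is \emph{not} parity-obstructed, since we demand that all prime factors lie in a set of positive density rather than that $\alpha p + \beta$ itself be prime. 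One iterates the Buchstab identity to peel off the large prime factors $q \equiv 3 \pmod 4$, bounds each resulting sub-sum by the sieve upper bound, and reassembles the pieces through the integral equation for the half-dimensional sieve; the bilinear error terms that arise are controlled by the dispersion and large-sieve inputs of \cite[Chapter 14]{fi10}, and it is precisely here that the hypothesis $x \geq \alpha^{68}$ is consumed. The outcome is a matching lower bound of order $x/(\log x)^{3/2}$ with the same Euler-product factor.

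In short, I expect the lower bound to be overwhelmingly the harder part, and this is in fact a deep theorem rather than something provable by the elementary tools assembled in Section 2; accordingly it is invoked here exactly as \cite[Theorem 14.8]{fi10}. The upper bound, by contrast, is a routine application of standard half-dimensional sieve estimates once the Bombieri--Vinogradov level of distribution is in hand.
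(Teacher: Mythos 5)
The paper does not prove this statement at all: it is quoted verbatim as \cite[Theorem 14.8]{fi10} of Friedlander--Iwaniec, which is exactly the conclusion you reach, so your proposal agrees with the paper's treatment. Your sketch of the underlying semilinear-sieve argument --- dimension $1/2$, the order $x/(\log x)^{3/2}$ arising from $V(z) \asymp (\log z)^{-1/2}$, Bombieri--Vinogradov as the level-of-distribution input, and the lower bound as the genuinely deep half --- is a fair account of why the result is invoked rather than reproved here.
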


Since $m = m_1m_2 \equiv 3 \mod 4$, we have that $\alpha(m) + \beta(m) \equiv 1 \mod 4$. We also have $(\alpha(m), \beta(m)) = 1$, and $\alpha(m) \leq s(m) \leq m^2 \leq x^{1/250}$. Hence, all of the hypotheses of Theorem 14.8 are satisfied, and so
\[
\sideset{}{'} \sum_{m \leq x^{1/500}} \sum_{\substack{x^{1/2} < P \leq x/m \\ p \mid P\alpha(m) + \beta(m) \implies p \equiv 1 \mod 4}} 1 \gg \sideset{}{'} \sum_{m \leq x^{1/500}} \frac{x/m}{(\log x)^{3/2}} \prod_{\substack{p \mid \alpha(m)\beta(m) \\ p \equiv 3 \mod 4}} \bigg(1 + \frac{1}{p} \bigg).
\]
Consider the product in the above display. Noting that $(\alpha(m), \beta(m)) = 1$ and $(s(m), \sigma(m))$ is free of prime factors congruent to $3 \mod 4$, we can write
\begin{align*}
\prod_{\substack{p \mid \alpha(m)\beta(m) \\ p \equiv 3 \mod 4}} \bigg(1 + \frac{1}{p} \bigg) &= \prod_{\substack{p \mid \alpha(m) \\ p \equiv 3 \mod 4}} \bigg(1 + \frac{1}{p} \bigg)\prod_{\substack{p \mid \beta(m) \\ p \equiv 3 \mod 4}} \bigg(1 + \frac{1}{p} \bigg) \\ &= \prod_{\substack{p \mid s(m) \\ p \equiv 3 \mod 4}} \bigg(1 + \frac{1}{p} \bigg)\prod_{\substack{p \mid \sigma(m) \\ p \equiv 3 \mod 4}} \bigg(1 + \frac{1}{p} \bigg).
\end{align*}
Now, since $\sigma(m)$ is divisible by all primes $p \leq \sqrt{\log_2 x}$ by property (i) from Proposition \ref{m conditions}, it follows from Corollary \ref{mult mertens in progressions} that
\[
\prod_{\substack{p \mid \sigma(m) \\ p \equiv 3 \mod 4}} \bigg(1 + \frac{1}{p} \bigg) \gg \sqrt{\log_3 x}.
\]
By simply ignoring the product over $p \mid s(m)$, we obtain a lower bound of the form
\begin{align}\label{lower bound m sum}
\frac{x(\log_3 x)^{1/2}}{(\log x)^{3/2}} \sideset{}{'} \sum_{m \leq x^{1/500}} \frac{1}{m}.
\end{align}


For the sum, note that
\[
\sideset{}{'} \sum_{m \leq x^{1/500}} \frac{1}{m} \gg \sideset{}{^\flat} \sum_{m \leq x^{1/500}} \frac{1}{m} - \sideset{}{^\sharp} \sum_{m \leq x^{1/500}} \frac{1}{m},
\]
where the $\flat$ symbol indicates that the sum is over those $m = m_1m_2$ satisfying \emph{only} the decomposition conditions in the bulleted list above, and the $\sharp$ symbol indicates a sum over those $m$ (satisfying the decomposition conditions) for which:
\begin{itemize}
\item[(a)] there exists a prime $q \mid (m, \sigma(m))$, $q \equiv 3 \mod 4$;
\item[(b)] $m$ fails condition (i) from Proposition \ref{m conditions}; or
\item[(c)] $m$ fails to satisfy the bound \eqref{m isnt small}.
\end{itemize}

We bound the $\sharp$ sum from above, to show we are not subtracting too much. We deal first with condition (a). If a prime $q \equiv 3 \pmod 4$ divides $(m, \sigma(m))$, then $q \mid m_2$, and hence $m \geq q > (\log_2 x)^{10}$. For $t \geq (\log_2 x)^{10}$, let $A(t)$ denote the number of $m \leq t$ such that there exists such a prime $q \mid (m, \sigma(m))$. From the proof of Lemma 2.2 of \cite{tro15}, $A(t) = O(t/(\log_2 x)^4)$. An application of partial summation and integration by parts yields
\begin{align*}
\sum_{\substack{(\log_2 x)^{10} < m \leq x^{1/500} \\ \exists \, q > (\log_2 x)^{10} \, , \, q \mid (m, \sigma(m))}} \frac{1}{m} \ll \int_{(\log_2 x)^{10}}^{x^{1/500}} \frac{A(t)}{t^2} \, dt \ll \frac{1}{(\log_2 x)^4} \int_{(\log_2 x)^{10}}^{x^{1/500}} \frac{1}{t} \, dt \ll \frac{\log x}{(\log_2 x)^4}.
\end{align*}

Now suppose $m$ falls into group (b), but not (c). As before, we set $L = \lceil \log x \rceil$ and consider $\lambda \in \cL = \{1/L, \ldots, (L - 1)/L\}$. The number of $m$ in the interval $(x^\lambda, ex^\lambda]$ for which one of the properties (i) -- (iii) fails is $O(x^\lambda/(\log_2 x)^4)$ (cf. equation \eqref{m in lambda failing}), and hence, summing over $m$ satisfying (b) but not (c) yields (with $y = \exp(\log x / (\log_2 x)^4)$)
\[
\sum_{\substack{y < m \leq x^{1/500} \\ m \text{ satisfies (b)}}} \frac{1}{m} \leq \sum_{\lambda \in \cL} \sum_{\substack{x^\lambda < m \leq ex^\lambda \\ m \text{ satisfies (b)}}} \frac{1}{m} \ll \sum_{\lambda \in \cL} \frac{1}{x^\lambda} \cdot \frac{x^\lambda}{(\log_2 x)^4} \ll \frac{\log x}{(\log_2 x)^4}.
\]

Finally, if $m$ is counted by (c), we have the easy bound
\[
\sum_{m \leq \exp(\log x / (\log_2 x)^4)} \frac{1}{m} \ll \frac{\log x}{(\log_2 x)^4}.
\]

Thus, we have shown that
\[
\sideset{}{^\sharp} \sum_{m \leq x^{1/500}} \frac{1}{m} \ll \frac{\log x}{(\log_2 x)^4}.
\]
This upper bound, combined with the factor of $\frac{x(\log_3 x)^{1/2}}{(\log x)^{3/2}}$ from \eqref{lower bound m sum}, results in a term of the shape
\[
\frac{x(\log_3 x)^{1/2}}{(\log x)^{3/2}}\sideset{}{^\sharp} \sum_{m \leq x^{1/500}} \frac{1}{m} \ll \frac{x(\log_3 x)^{1/2}}{(\log x)^{3/2}} \cdot \frac{\log x}{\log_3 x} = o\Big( \frac{x}{(\log x)^{1/2}} \Big),
\]
which is negligible.

It remains to bound the quantity
\[
\frac{x(\log_3 x)^{1/2}}{(\log x)^{3/2}} \sideset{}{^\flat} \sum_{m \leq x^{1/500}} \frac{1}{m}
\]
from below. It is clear that
\[
\sideset{}{^\flat} \sum_{m \leq x^{1/500}} \frac{1}{m} \gg \bigg( \sum_{\substack{m_1 \leq x^{1/1000} \\ m_1 \sqrt{\log_2 x}-\text{smooth} \\ p \mid m_1 \implies p \equiv 1 \mod 4 \\ m_1 \text{ squarefree}}} \frac{1}{m_1} \bigg) \bigg( \sum_{\substack{m_2 \leq x^{1/1000} \\ m_2 \equiv 3 \mod 4 \\ p \mid m_2 \implies p > (\log_2 x)^{10}}} \frac{1}{m_2} \bigg).
\]
We first consider the sum over $m_1$. Notice that
\[
\prod_{\substack{p \leq \sqrt{\log_2 x} \\ p \equiv 1 \mod 4}} p = \exp\bigg( \sum_{\substack{p \leq \sqrt{\log_2 x} \\ p \equiv 1 \mod 4}} \log p \bigg) \leq \exp(c \sqrt{\log_2 x}) < x^{1/1000}
\]
for some absolute constant $c > 0$ and sufficiently large $x$. Therefore, the sum over $m_1$ is
\[
\sum_{\substack{m_1 \leq x^{1/1000} \\ m_1 \sqrt{\log_2 x}-\text{smooth} \\ p \mid m_1 \implies p \equiv 1 \mod 4 \\ m_1 \text{ squarefree}}} \frac{1}{m_1} = \prod_{\substack{p \leq \sqrt{\log_2 x} \\ p \equiv 1 \mod 4}} \bigg(1 + \frac{1}{p} \bigg) \asymp (\log_3 x)^{1/2},
\]
again by Corollary \ref{mult mertens in progressions}. For the sum over $m_2$, a quick application of Brun's sieve shows that
\[
\#\{ m_2 \leq z : p \mid m_2 \implies p \in [(\log_2 x)^{10}, z], m_2 \equiv 3 \mod 4 \} \gg \frac{z}{\log_3 x}
\]
for all $z \geq \log x$, say. Then, by partial summation,
\[
\sideset{}{'} \sum_{m_2 \leq x^{1/1000}} \frac{1}{m_2} \gg \frac{\log x}{\log_3 x}.
\]
Combining all of our estimates, we obtain 
\[
\frac{x(\log_3 x)^{1/2}}{(\log x)^{3/2}} \sideset{}{^\flat} \sum_{m \leq x^{1/500}} \frac{1}{m} \gg \frac{x(\log_3 x)^{1/2}}{(\log x)^{3/2}} \cdot (\log_3 x)^{1/2} \cdot \frac{\log x}{\log_3 x} = \frac{x}{\sqrt{\log x}},
\]
as desired.

\bigskip

\noindent\textbf{Acknowledgements.} The author wishes to thank Paul Pollack for suggesting the problem addressed in this paper and for many helpful conversations during the preparation of this manuscript.

\bibliographystyle{amsplain}
\bibliography{refs}

\end{document}